\newcommand{\rrrr}[1]{}
\definecolor{DarkGreen}{rgb}{0,0.4,0.1}
\definecolor{orange}{rgb}{1,0.4,0}
\newcommand{\ignore}[1]{}
\numberwithin{equation}{section}
\numberwithin{figure}{section}
\newcommand\overmat[2]{%
  \makebox[0pt][l]{$\smash{\overbrace{\phantom{%
    \begin{matrix}#2\end{matrix}}}^{\text{$#1$}}}$}#2}
\newcommand{\reals}{\mathbb{R}}
\newcommand{\integers}{\mathbb{Z}}
\newcommand{\naturals}{\mathbb{N}}
\newcommand*\dint{\mathop{}\!d}
\newcommand{\diff}[1]{\ensuremath{\frac{d^{#1}}{dx^{#1}}}}
\newcommand{\px}{\operatorname{\mathsf{x}}}
\newtheorem{theorem}{Theorem}[section]
\newtheorem{corollary}[theorem]{Corollary}
\newtheorem{proposition}[theorem]{Proposition}
\newtheorem{remark}[theorem]{Remark}
\title{Full operator preconditioning and the accuracy of solving linear systems}
\author{Stephan Mohr\thanks{Department of Physics, Technical University Munich, James-Franck-Str. 1, 85748 Garching, Germany.  { \tt stephan.mohr@tum.de}  }
  \and Yuji Nakatsukasa\thanks{Mathematical Institute, University of Oxford, Woodstock Road,
Oxford, OX2 6GG, UK.  { \tt nakatsukasa@maths.ox.ac.uk}}  
  \and Carolina Urz\'ua-Torres
  \thanks{Delft Institute for Applied Mathematics, Delft University of Technology,
The Netherlands. {\tt c.a.Urzuatorres@tudelft.nl }}
}
\date{}
\begin{document}

\maketitle

%%%%%%%%%%%%%%%%%%%%%%%%%%%%%%%%%%%%%%%%%%%%%%%%%%%%%%%%%%%%%%%%%%%%%%%%%%%%%%%
\begin{abstract}
Unless special conditions apply, the attempt to solve ill-conditioned systems 
of linear equations with standard numerical methods leads to uncontrollably 
high numerical error. Often, such systems arise from the discretization of 
operator equations with a large number of discrete variables. In this paper we 
show that the accuracy can be improved significantly if the equation is 
transformed before discretization, a process we call full operator preconditioning 
(FOP). It bears many similarities with traditional preconditioning for iterative 
methods but, crucially, transformations are applied at the operator level.
We show that while condition-number improvements from traditional preconditioning 
generally do not improve the accuracy of the solution, FOP can. 
A number of topics in numerical analysis can be interpreted as implicitly 
employing FOP; we highlight (i) Chebyshev interpolation in polynomial approximation, 
and (ii) Olver-Townsend's spectral method, both of which produce solutions of 
dramatically improved accuracy over a naive problem formulation. 
In addition, we propose a FOP preconditioner based on integration for the 
solution of fourth-order differential equations with the finite-element method, 
showing the resulting linear system is well-conditioned regardless of the 
discretization size, and demonstrate its error-reduction capabilities on 
several examples. This work shows that FOP can improve accuracy beyond the 
standard limit for both direct and iterative methods. 

\end{abstract}

% \input{text/text-1-introduction}
%%%%%%%%%%%%%%%%%%%%%%%%%%%%%%%%%%%%%%%%%%%%%%%%%%%%%%%%%%%%%%%%%%%%%%%%%%%%%%%
\section{Introduction}

Ill-conditioned linear systems $\mathbf{Ax}=\mathbf{b}$ cannot be solved to 
high accuracy: even with a backward stable solution $\mathbf{\hat x}$ satisfying 
$(\mathbf{A}+\Delta \mathbf{A})\mathbf{\hat x}=\mathbf{b}$ with $\|\Delta 
\mathbf{A}\|_2 =O(\epsilon \|\mathbf{A}\|_2)$ where $\epsilon$ is on the order 
of machine precision, it is well known that we only have $\|\mathbf{\hat x}-
\mathbf{x}\|_2/\|\mathbf{x}\|_2\lesssim \epsilon\kappa(\mathbf{A})$, where 
$\kappa(\mathbf{A})=\|\mathbf{A}\|_2\|\mathbf{A}^{-1}\|_2$ denotes the $2$-norm 
matrix condition number~\cite[\S~1.6]{higham}. 
In other words, once the linear system has been set up, there is no way to reduce 
the numerical error, except by going to higher-precision arithmetic or employing 
a symbolic solver \cite[\S~7.3]{greenbaum_book}, \cite[\S~1.3]{higham}. 

In this paper, we present and discuss a method that attempts to circumvent this 
deadlock by taking the only possible route out: Reformulating the problem. The 
method is applicable to systems of linear equations arising from the discretization 
of an equation posed in continuous spaces, where the matrix of coefficients is 
the finite-dimensional representation of a linear operator. Here it is often the 
\emph{discretization size} that determines the conditioning of the linear problem: 
the higher the number of rows and columns of the matrix, the worse its conditioning 
typically becomes, until one obtains garbage or even numerical 
blow-up \cite{hackbusch_book_2}.

Our approach is inspired by the preconditioning of linear systems used for the 
acceleration of iterative methods. It shares the goal of transforming the problem 
into one that is more easily solvable, and---often used as a historical explanation 
for the term ``preconditioning'' \cite{wathen}---it usually aims at reducing the 
condition number. The crucial difference is the level of abstraction at which the 
transformation takes place: Whereas traditional preconditioning applies 
transformation matrices to the potentially ill-conditioned linear system after 
discretization has taken place, the presented method transforms the operator itself 
\emph{before} discretization.
We call our approach \emph{full operator preconditioning} (FOP), emphasizing the 
structural similarities, but indicating that transformations take place on the 
operator level instead of the matrix level, and distinguishing it from what is 
conventionally called operator preconditioning or PDE-inspired preconditioning 
by other authors \cite{preconditioning_equivalent_operators, hiptmair, mardal_2011}, 
which is a form of traditional preconditioning; see 
Section \ref{sec:matrix-preconditioning-does-not-work} for a discussion.

Traditional (matrix) preconditioning has the goal of speeding up an iterative 
method for solving linear systems by  clustering the spectrum of the preconditioned 
matrix, so that a Krylov subspace method converges in a small number of 
iterations~\cite{wathen}. While reducing conditioning is sufficient in the 
symmetric case, it is neither sufficient nor necessary for fast convergence for 
general matrices~\cite[Chapter 3.2]{greenbaum_book}. An underappreciated fact is 
that, unless special structure is present, matrix preconditioning does \emph{not} 
improve the accuracy of the computed solution. FOP, by contrast, does. 

To illustrate the idea of FOP, we give the following rough example:
Let $\mathcal{L}$ be a linear differential operator and $u$ and $f$ functions such 
that
\begin{align}
    \mathcal{L} u = f. \label{eq:prototype-equation}
\end{align}
Precise definitions are given in the next sections.
This equation is normally tackled by choosing an appropriate discretization such 
as \emph{finite differences}~\cite{leveque2007finite}, \emph{finite elements} 
\cite{hackbusch} or spectral methods~\cite{introduction_spectral_methods,trefethen_spectral}, 
which represents the operator $\mathcal{L}$ as a square matrix $\mathbf{L}$. If 
$\mathbf{L}$ is highly ill-conditioned $\kappa(\mathbf{L})\geq \epsilon^{-1}$, then 
computed results could be useless even with an excellent (backward stable) method.

Now assume that there is a solution operator $\mathcal{R}$ inverting the differential 
operator $\mathcal{L}$ with appropriate boundary conditions. By applying $\mathcal{R}$ 
from the left, we transform equation (\ref{eq:prototype-equation}) to
\begin{align}
    \mathcal{I} u = g, \label{eq:prototype-equation-operator-precond}
\end{align}
with the identity operator $\mathcal{I}$ and right-hand side $g = \mathcal{R} f$. 
While the form of equation (\ref{eq:prototype-equation-operator-precond}) may look 
tautologous with the trivial solution $u=g$, it was chosen intentionally to point 
out that it is amenable to the same discretization-plus-numerical-solver strategy 
as the original equation.
But now the operator to be discretized is the identity instead of the differential 
operator $\mathcal{L}$. A reasonable discretization scheme then leads to 
well-conditioned matrices regardless of the the number of terms. 

Equation (\ref{eq:prototype-equation}) is a type of \emph{operator equation}. 
These equations are all tackled in a similar fashion, and examples besides 
differential equations include integral equations and 
interpolation \cite[Ch.~12]{roemisch_book}. All examples considered later 
in this paper arise from operator equations. For this reason, we dedicate 
the first part of Section \ref{chap:basics} to introduce their 
discretization. We also discuss two types of error---numerical and discretization 
error---which are affected differently by changing the discretization. 
We then give our formal definition of FOP, highlighting the contrasts with the 
traditional notion of preconditioning. 
We then explain why FOP can help improve the accuracy in solving ill-conditioned 
linear systems while matrix preconditioning in general cannot, a fact that has 
been treated as a sidenote in the literature \cite[\S~7.3]{greenbaum_book}, and 
is---to our knowledge---first discussed here in full detail. 

As we will see, FOP is already implicitly part of many methods of numerical 
analysis. In the following three sections, we demonstrate the power of the 
full-operator approach with three examples. We start by looking at the classical 
subject of polynomial interpolation of a univariate function.
In Section \ref{sec:interpolation-operator-equation}, 
we formulate the task as the discretization of an operator equation involving 
the identity operator. We interpret changes between different bases of polynomials 
as FOP and show that such a change can lead to system-size-independent conditioning, 
completely removing numerical instabilities.

Section \ref{chap:spectral} continues with a discussion of spectral methods. 
We investigate a method by Olver and Townsend~\cite{olver_townsend}: a change 
from Chebyshev to ultraspherical polynomials leads to a remarkable reduction of 
the condition number and accurate solution. This can be regarded as an application 
of FOP, however, it is not identified as such in the original paper. 

In section \ref{chap:fem}, we turn to finite-element discretizations. Generalizing 
the observations in the previous sections, we design a FOP preconditioner for 
fourth-order differential equations in one dimension. It is based on the idea of 
solving the biharmonic equation algorithmically for the finite-element basis 
functions and, as we show in 
Section~\ref{chap:fem}, it 
reduces the growth of the norm of associated matrices from $\mathcal{O}(n^4)$ 
to $\mathcal{O}(1)$ while also guaranteeing their invertibility. Numerical 
examples illustrate that it allows reduction of the total error below the limit 
imposed by numerical error in the unpreconditioned system.

FOP is also related to the literature on integral equations. By recasting a 
problem (often differential equation) as an integral equation, the resulting 
conditioning of the linear system is often significantly 
better~(e.g. \cite{greengard1991spectral,greengard1991numerical}), leading to 
more accurate solutions. This paper shows that a similar idea can be employed 
in a number of problems in numerical analysis.  

We conclude the paper with a summary, an outlook onto potential topics of further 
research, and a statement about the implications of the topics discussed in this 
paper.

% \input{text/text-2-basics}
%%%%%%%%%%%%%%%%%%%%%%%%%%%%%%%%%%%%%%%%%%%%%%%%%%%%%%%%%%%%%%%%%%%%%%%%%%%%%%%
\section{Mathematical basics} \label{chap:basics}

%%%%%%%%%%%%%%%%%%%%%%%%%%%%%%%%%%%%%%%%%%%%%%%%%%%%%%%%%%%
\subsection{Numerical solution of operator equations} 
\label{sec:operator-equations}

Let $\mathcal{L}$ be an operator between two infinite-dimensional Hilbert spaces 
$V$ and $W$,
\begin{align}\nonumber
    \mathcal{L} : V \rightarrow W,
\end{align}
where $\mathcal{L} , V$ and $W$ may encode boundary conditions as appropriate. 

Suppose we are given the equation
\begin{align}
    \mathcal{L} u = f, \label{eq:operator-equation}
\end{align}
where $f \in W$, and we seek the solution $u \in V$. 

Given $n \in \naturals$, we approximate the solution $u$ by a linear combination 
of \emph{trial basis functions} $\{ \phi_i\}_{i=1}^n$
\begin{align}
\bar u:=   \sum_{k=1}^n u_k \phi_k, \quad \text{ with } u_k \in \reals^n, \quad 
k \in \{1, \dotsc, n\} \label{eq:u-linear-combi-trial}
\end{align}
and we call $V_n = \text{span} \{ \phi_i\}_{i=1}^n$ the \emph{trial space}. 
Further, we choose the same number of linearly independent \emph{test basis 
functions} $\{ \psi_i\}_{i=1}^n$, which span the \emph{test space}
$ W_n=\text{span}\{ \psi_i\}_{i=1}^n$. These basis functions are chosen such 
that $V_n \subset V$ and $W_n \subset W$.

Inserting the approximation (\ref{eq:u-linear-combi-trial}) into the operator 
equation (\ref{eq:operator-equation}) and taking the scalar product in $W$ with 
each of the test functions, we obtain a linear system of $n$ equations for the 
same number of unknown coefficients. Assembling the coefficients in the vector 
$\mathbf{u} = \left(u_1, \dotsc, u_n \right) \in \reals^n$, we write the system 
in matrix form
\begin{align}
    \mathbf{L} \mathbf{u} = \mathbf{b}, \label{eq:linear-system-noprec}
\end{align}
where $\mathbf{b}$ and $\mathbf{L}$ have entries
\begin{equation}
  \begin{split}
\mathbf{b}[j] &= (\psi_j, f)_W, \qquad\, \text{ for } j \in \{1, \dotsc, n\},\\
\mathbf{L}[j,k] &= (\psi_j, \mathcal{L} \phi_k )_W, \quad \text{ for } j, k 
\in \{1, \dotsc, n\},    
  \end{split}  \label{eq:linear-system-noprec-matrix}
\end{equation}
and where $(\cdot , \cdot )_W$ is the scalar product defined on the Hilbert space 
$W$. The linear system (\ref{eq:linear-system-noprec}) is then solved with 
an iterative or direct method~\cite{saad_book} for computing a numerical solution 
$\hat{\mathbf{u}}$. 

Solving the original equation (\ref{eq:operator-equation}) this way introduces 
two sources of error: First, there is the error between the \emph{true 
solution $u$ of \eqref{eq:operator-equation}} and its \emph{approximation 
by trial basis functions} $\bar u = \sum_{k=1}^n u_k \phi_k$, where $u_k$ are 
the components of the \emph{exact} solution $\mathbf{u}$ to equation 
(\ref{eq:linear-system-noprec}). We call this the \emph{discretization error} 
$E_D:=\|u - \bar{u}\|$.

The other type of error is the \emph{numerical error} $E_N:=\|\mathbf{u} - \hat{
\mathbf{u}}\|$, which represents the error in solving~\eqref{eq:linear-system-noprec} 
in finite-precision arithmetic to obtain the computed solution $\hat{\mathbf{u}}$, 
and is estimated by $\frac{\Vert \mathbf{u} - \hat{\mathbf{u}} \Vert_2}{\Vert 
\mathbf{u} \Vert_2}= O(\epsilon\kappa(\mathbf{L}))$. 

Since the overall error of a computed solution is roughly the sum $E_D+E_N$ of 
the discretization and numerical errors, to obtain high accuracy we need both to 
be small. A ubiquitous phenomenon in numerical analysis is that while increasing 
the discretization size $n$ usually reduces $E_D$, it also often worsens the 
conditioning of the linear system, thus increasing $E_N$. For small $n$ we always 
have $E_D \gg E_N$; as we increase $n$, at some point $E_N$ becomes the dominant 
term, i.e. $E_N\gg E_D$. Moreover, $E_N$ keeps growing with $n$, resulting in a 
V-shaped accuracy curve with respect to $n$; see e.g.~\cite[Fig.~3.3]{betcke2005reviving} 
and Figures~\ref{fig:fem-biharmonic-nonpreconditioned},\ref{fig:fem-comparison-errors}. 
The goal of FOP is to suppress the growth of $E_N$ and obtain an accuracy curve 
that improves steadily with $n$. 

It is worth noting that in low-order methods such as some finite-difference and 
finite-element methods, it has traditionally been $E_D$ that dominates; numerical 
errors and conditioning therefore appear to have gained little attention in the 
FEM literature.  
However, this may well change: first, when high accuracy is needed, $n$ may need 
to be large enough to enter the regime $E_N\gg E_D$. Second, and more nontrivially, 
the breakeven point where $E_D\approx E_N=O(\epsilon\kappa(A))$ depends on the 
working precision $\epsilon$, and a compelling line of recent research is to use 
low-precision arithmetic for efficiency~\cite{abdelfattah2020survey} in 
scientific computing and data science applications. In such situation, $E_N$ 
would start dominating for a modest discretization size, making FOP an important 
technique to retain good solutions.

%%%%%%%%%%%%%%%%%%%%%%%%%%%%%%%%%%%%%%%%%%%%%%%%%%%%%%%%%%%
\subsection{Matrix-level preconditioning and FOP}
\label{sec:preconditioning-introduction}

As we review in the next section, in addition to larger errors, high condition 
numbers also often result in a long runtime for many iterative methods which are 
chiefly employed for the solution of this type of equation. This makes a reduction 
of $\kappa(\mathbf{L})$ desirable, both for reasons of numerical stability and 
computational speed. Two such reduction methods are contrasted in this paper, 
which we call matrix preconditioning and FOP, respectively. We introduce them now.

One approach is to manipulate the equations after discretization. Traditionally, 
preconditioning involves the definition of suitable matrices $\mathbf{R}_l$ and 
$\mathbf{R}_r \in \reals^{n \times n}$, one potentially the identity, and subsequent 
solution of
\begin{align}
    \mathbf{R}_l \mathbf{L} \mathbf{R}_r \mathbf{v} = \mathbf{R}_l \mathbf{f}, 
    \label{eq:matrix-preconditioning}
\end{align}
for $\mathbf{v} \in \reals^n$. The coefficient vector solving the original problem 
is obtained by
\begin{align}\nonumber
    \mathbf{u} = \mathbf{R}_r \mathbf{v}.
\end{align}
This is what we call \emph{matrix preconditioning}. Note that $\mathbf{R}_l$ and 
$\mathbf{R}_r$ do not necessarily need to be available as matrices; for many 
algorithms it suffices to be able to compute their linear action on a vector 
\cite{wathen}. The term matrix preconditioning instead refers to the fact that 
preconditioning takes place after matrices have already been computed, in contrast
to the next method.

In our main subject of FOP, instead of applying changes after the discretization 
has taken place, we manipulate the equation on the operator level. Let
\begin{align}\nonumber
    \mathcal{R}_r &: \widetilde{V} \rightarrow V, \qquad 
    \mathcal{R}_l : W \rightarrow \widetilde{W}
\end{align}
be linear operators and consider the equation
\begin{align}
    \mathcal{R}_l \mathcal{L} \mathcal{R}_r v = \mathcal{R}_l f. 
    \label{eq:operator-equation-prec}
\end{align}
This is formally identical to the original operator equation (\ref{eq:operator-equation}), 
but with different (potentially better) numerical properties. We now 
solve~\eqref{eq:operator-equation-prec} as before: we choose new test and trial 
spaces
\begin{align}\nonumber
    \widetilde{V}_n = \text{span} \left( \Phi_1, \dotsc, \Phi_n \right) \subset 
    \widetilde{V}, \qquad 
    \widetilde{W}_n = \text{span} \left( \Psi_1, \dotsc, \Psi_n \right) \subset
    \widetilde{W},
\end{align}
and discretize analogously as before to obtain 
\begin{align}
    \tilde{\mathbf{L}} \tilde{\mathbf{u}} = \tilde{\mathbf{b}}, 
    \label{eq:linear-system-operator-prec}
\end{align}
where
\begin{align}\nonumber
  \tilde{\mathbf{L}}[j,k]  = (\Psi_j, \mathcal{R}_l \mathcal{L} \mathcal{R}_r
  \Phi_k)_{\widetilde{W}}, \qquad     
  \tilde{\mathbf{b}}[j] & = (\Psi_j, \mathcal{R}_l f)_{\widetilde{W}}. 
\end{align}
The solution of the original system is then approximated by
$    u = \sum_{k=1}^n \tilde{u}_k \mathcal{R}_r \Phi_k,$
where $\tilde{\mathbf{u}} = (\tilde{u}_1, \dotsc, \tilde{u}_n)$ is the solution 
of equation (\ref{eq:linear-system-operator-prec}).

%%%%%%%%%%%%%%%%%%%%%%%%%%%%%%%%%%%%%%%%%%%%%%%%%%%%%%%%%%%
\subsection{Operator preconditioning and FOP}
It is crucial to distinguish FOP here from what is sometimes called \emph{operator 
preconditioning} or \emph{PDE inspired preconditioning} in the literature 
\cite{preconditioning_equivalent_operators, hiptmair, mardal_2011}.
They propose to find $\mathcal{R}: W \to V$ 
such that $\mathcal{R} \mathcal{L}$ (resp. $\mathcal{L} \mathcal{R}$)
is an endomorphism on the continuous level. Then, each operator is discretized 
\emph{separately}. Under some conditions on the discretization, the resulting 
matrices are guaranteed to fulfill certain properties that make 
the matrix product $\mathbf{R}\mathbf{L}$ (resp. $\mathbf{L}\mathbf{R}$) 
well-conditioned. 
Importantly, in these methods, the system matrix $\mathbf{L}$ is not changed.
Hence, in the classification above, these are examples of matrix preconditioning.

Nevertheless, the operators proposed as continuous models for matrix 
preconditioners in \cite{preconditioning_equivalent_operators} and \cite{mardal_2011} 
lead to very potent FOP preconditioners, and the class of operators considered 
in \cite{preconditioning_equivalent_operators} contains the FOP preconditioners 
used in Sections \ref{chap:spectral} and \ref{chap:fem} of this work, such that 
operator preconditioning and FOP take inspiration from the same source.

FOP can also be understood as a change of the trial and test bases in $V$ and 
$W$, with no additional operators involved. Let $\mathcal{R}_l^* : \tilde{W} 
\rightarrow W$ denote the adjoint of $\mathcal{R}_l$. Then choosing 
\begin{align}\nonumber
    V_n &= \text{span}\{ \mathcal{R}_r \Phi_i\}_{i=1}^n, \qquad W_n = 
    \text{span}\{ \mathcal{R}_l^* \Psi_i\}_{i=1}^n,
\end{align}
as trial and test spaces instead of the original $\text{span}\{ \phi_1, \dotsc, 
\phi_n\}$ and $\text{span}\{ \psi_1, \dotsc, \psi_n \}$ and following the regular 
discretization procedure in (\ref{eq:linear-system-noprec}) with no preconditioning 
leads to the same system as the FOP procedure in (\ref{eq:operator-equation-prec}). 
This equivalent formulation is sometimes helpful for understanding an algorithm 
as an application of FOP or for deriving the form of the operators $\mathcal{R}$.

Independent of the interpretation of the preconditioning, a central requirement 
for FOP, besides the abstract definition of suitable $\mathcal{R}$, is the 
ability to compute elements of the matrix $\tilde{\mathbf{L}}$ to sufficient 
accuracy---in particular an accuracy that is independent of the system size $n$ 
and the condition number $\kappa(\mathbf{L})$ of the original matrix. 
If this is possible, FOP provides a way to decidedly improve the numerical error.
Before we make this statement more specific, 
we discuss the power and limitation of matrix preconditioning.

%%%%%%%%%%%%%%%%%%%%%%%%%%%%%%%%%%%%%%%%%%%%%%%%%%%%%%%%%%%
\subsection{Matrix preconditioning improves speed but not accuracy} 
\label{sec:matrix-preconditioning-does-not-work}
A classical convergence bound for the conjugate gradient (CG) method shows that 
for a positive definite linear system $\mathbf{L}\mathbf{u}=\mathbf{b}$, the 
$\mathbf{L}$-norm error converges exponentially with constant $\frac{\sqrt{\kappa
(\mathbf{L})} - 1}{\sqrt{\kappa(\mathbf{L})} + 1}$. Similar bounds hold for MINRES 
for symmetric indefinite systems~\cite[Chapter 8]{greenbaum_book}, \cite{liesen_2004}. 
Traditional matrix preconditioning thus aims to reduce the condition number, 
thereby speeding up convergence. 
When GMRES is applied to nonsymmetric/nonnormal linear systems, reducing the 
condition number does not necessarily improve speed \cite{greenbaum_1996}; however, 
one could solve the normal equation by CG once the system is well conditioned. 
Krylov subspace methods generally converge rapidly when the spectrum is clustered 
at a small number of points away from 0; some preconditioners aim to achieve 
this \cite{wathen}. 

While a good (matrix) preconditioner can dramatically improve the \emph{speed}, 
an aspect that is often overlooked is that it does not improve the \emph{accuracy} 
of the solution. A brief comment on this is given in \cite[Chapter 7.3]{greenbaum_book}. 

To gain insight, consider the following situation: Let $\mathbf{L}$ be a matrix with 
arbitrary condition number, and suppose we want to solve the equation
\begin{align}
    \mathbf{L} \mathbf{x} = \mathbf{b} \label{eq:matrix-equation}
\end{align}
Suppose also that an effective preconditioner $\mathbf{R}$ is available, so that 
$\mathbf{R} \mathbf{L}$ is close to the identity. Then the condition number of 
$\mathbf{R}$ must be similar to that of $\mathbf{L}$: the matrix $\mathbf{R}$ 
approximates $\mathbf{L}^{-1}$, and $\kappa(\mathbf{L}) = \kappa(\mathbf{L}^{-1})$. 

When solving (\ref{eq:matrix-equation}) with a preconditioned iterative algorithm, 
each iteration involves one multiplication of the current iterate by each $\mathbf{L}$ 
and $\mathbf{R}$, see again \cite[Chapter 8]{greenbaum_book}.
By \cite[(3.12)]{higham}, matrix-vector multiplication on a computer suffers from an 
error proportional to the norm of the matrix and the vector: It holds
\begin{align}
    fl(\mathbf{L} \mathbf{v}) = \mathbf{L} \mathbf{v} + \mathbf{\zeta}, \quad 
    \text{ with } \quad \Vert \mathbf{\zeta} \Vert_2 \leq \epsilon \Vert \mathbf{L} 
    \Vert_2 \Vert \mathbf{v} \Vert_2,\label{eq:higham-matrix-vector-bound2}
\end{align}
where $\epsilon \approx 10^{-16}$ is close to machine precision and $fl( \cdot )$ 
denotes the result of a floating-point computation. Using the same fact again, 
we find
\begin{align}\nonumber
    fl(\mathbf{R} ( \mathbf{L} \mathbf{v} + \mathbf{\zeta})) 
    = \mathbf{R} (\mathbf{L} \mathbf{v} + \mathbf{\zeta}) + \mathbf{\xi}, \quad 
    \text{ with } \quad \Vert \mathbf{\xi} \Vert_2 \leq \epsilon \Vert \mathbf{R} 
    \Vert_2 \Vert \mathbf{L} \mathbf{v} + \mathbf{\zeta} \Vert_2.
\end{align}
Together, this leads to the estimate
\begin{align}
\Vert \mathbf{R} \mathbf{L} \mathbf{v} - fl(\mathbf{R} fl(\mathbf{L} \mathbf{v})) 
\Vert_2 = \Vert \mathbf{R} \mathbf{\zeta} +  \mathbf{\xi} \Vert_2 
\approx 2 \epsilon \Vert \mathbf{R} \Vert_2 \Vert \mathbf{L} \Vert_2 
\Vert \mathbf{v} \Vert_2 \approx 2 \epsilon \kappa(\mathbf{L}) 
\Vert \mathbf{v} \Vert_2.
\label{eq:error-bound-iterative2}
\end{align}

Thus, if $\kappa(\mathbf{L}) \epsilon$ is large, the error introduced in each 
iteration is large as well, and we can not hope to obtain $O(\epsilon)$ accuracy
in any of the iterates. This represents \emph{no improvement} from nonpreconditioned 
iterative methods, for which the optimal residual is in the order of $\epsilon 
\Vert \mathbf{L} \Vert \Vert \mathbf{u} \Vert_2$, see \cite[section 7.3]{greenbaum_book}, 
implying a bound on the relative error from the true solution of $\epsilon 
\kappa(\mathbf{L})$. 

As discussed in the introduction, the $\epsilon \kappa(\mathbf{L})$ error bound 
is also the ``best'' bound with a direct method that gives a backward stable 
solution. Regardless of the preconditioner or numerical method, it is essentially 
impossible to obtain a solution with better than $\epsilon \kappa(\mathbf{L})$ 
accuracy, once the linear system is given. 

It should be noted that these are all worst-case estimates. However, the 
following example as well as the more realistic analyses in Sections 
\ref{sec:interpolation-operator-and-matrix-preconditioning}, 
\ref{sec:spectral-unpreconditioned} and \ref{sec:fem-unpreconditioned} show that 
such bounds give a good sense of the order of magnitude of the actual error.
In Figure \ref{fig:preconditioning-accuracy}, five random matrices with varying 
condition number were generated in $\reals^{n \times n}$ with $n=100$ 
as $\mathbf{L} = \mathbf{U} \mathbf{S} \mathbf{U}^\top$, where $\mathbf{S} = 
\text{diag}\{1, 2, \dotsc, n\}$ and 
$\mathbf{U} \in \reals^{n \times n}$ is a random orthogonal matrix.  
The exact solution was drawn randomly from $\reals^{n}$. The systems were solved 
using three methods: (i) GMRES; (ii) preconditioned GMRES, using the 
inverse of the matrix as a preconditioner, computed via implementing 
$\mathbf{L}^{-1} = \mathbf{U} \mathbf{S}^{-1} \mathbf{U}^\top$; and (iii) 
a direct solver, using LU factorization with pivots. 
With unpreconditioned GMRES, the relative error decreases at first, more rapidly 
for lower condition numbers. It then stagnates at a value (which we call the 
\emph{limiting error}) that is proportional to the condition number of the matrix, 
close to the upper bounds in equation \eqref{eq:error-bound-iterative2}, as 
indicated by ticks of the y-axis. Crucially, the preconditioned GMRES method 
converges in a single step but leads to roughly the same limiting error. The 
relative error of the solution obtained from the direct method lies close to 
this bound as well. 

\begin{figure}[h]
\centering
\includegraphics[width=0.95\textwidth]{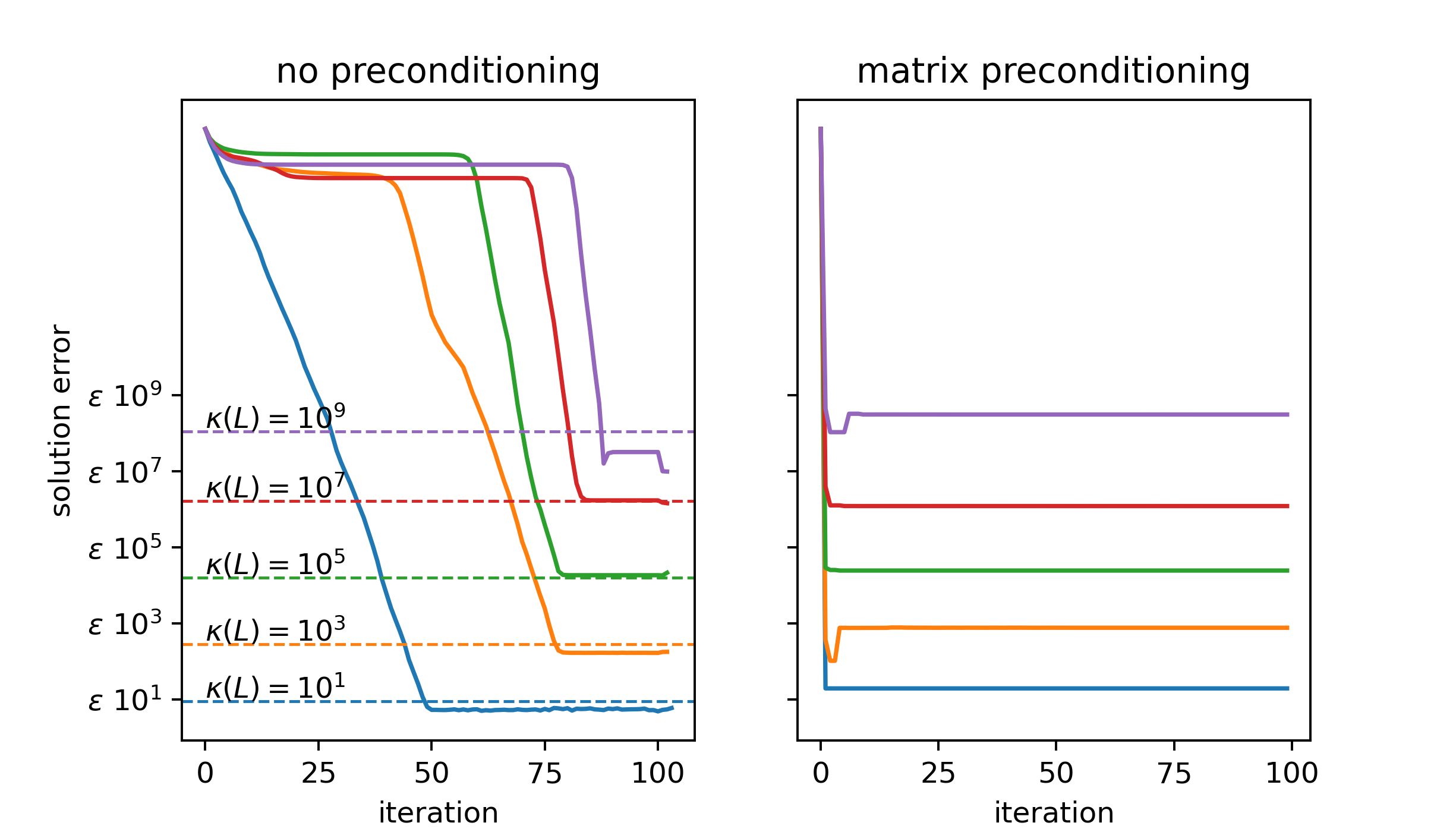}
\caption{Relative error as a function of iteration count for GMRES on linear 
systems with prescribed condition numbers in $\reals^{100 \times 100}$. The 
left panel shows unpreconditioned GMRES, the right preconditioned GMRES with 
$\mathbf{L}^{-1}$ as the preconditioner. Dashed lines in the left panel indicate 
the error of the solution obtained with a direct method. Ticks in the y axis 
are set at $\epsilon \kappa(\mathbf{L})$ for the different used matrices 
$\mathbf{L}$, where $\epsilon = 10^{-16}$ is close to machine precision.}
\label{fig:preconditioning-accuracy}
\end{figure}

This illustrates that the limiting error of the numerical solution roughly scales 
linearly with $\kappa(\mathbf{L})$, no matter what algorithm or preconditioner 
is used. 

One exception to this is when special structure in the matrices can be exploited. 
When tighter bounds than (\ref{eq:higham-matrix-vector-bound2}) hold, matrix 
preconditioning does improve the condition number and error with the same factor.
If, for example, the matrix preconditioner is diagonal, multiplication can be 
performed row or column-wise with machine accuracy, as no summation of elements 
is involved. In fact, diagonal preconditioning is often equivalent to one of 
the simplest forms of FOP, namely, the rescaling of the basis. However, 
unless such special cases apply, matrix preconditioning only improves the speed 
of iterative methods. 

So far in this section, we have seen that ill-conditioned matrices lead to high 
numerical error, and that matrix preconditioning does not alleviate this issue.
This holds even when the matrix preconditioner improves the convergence of 
iterative methods \emph{as if} the preconditioned system has a lower condition 
number. When error reduction is desired, the only effective alternative is to 
remove the ill-conditioning altogether. This is what FOP achieves.

With FOP, it is possible to obtain a solution to the original problem but through 
a different linear system $\tilde{\mathbf{L}} \tilde{\mathbf{u}} = \tilde{
\mathbf{b}}$ with significantly lower condition number $\kappa(\tilde{\mathbf{L}})$.
The system matrix $\tilde{\mathbf{L}}$ can be obtained precisely, as it is not 
the result of some potentially ill-conditioned matrix operation. The system can 
then be solved with improved accuracy (and speed if an iterative solver is used). 
Coming up with a good preconditioner for FOP is not trivial. However, some rules 
can guide this process. The next section introduces the framework which will be 
used to investigate FOP preconditioners. 

It is worth reiterating that to improve accuracy with FOP, the goal should always 
be to reduce the conditioning: If the spectrum is clustered but 
$\kappa(\mathbf{\tilde L})=\kappa(\mathbf{L})$, then only the speed will be 
improved, and not the accuracy.

%%%%%%%%%%%%%%%%%%%%%%%%%%%%%%%%%%%%%%%%%%%%%%%%%%%%%%%%%%%
\subsection{Operator-norm inheritance by discretized matrix} 
\label{sec:desired-properties}

As the condition number is composed of the norm of the matrix and its inverse, 
both factors need to be bounded to control its growth. Bounding the norm of the 
inverse, or even ensuring invertibility at all, is often a complex issue that 
involves additional assumptions in many solution algorithms \cite{boffi2013mixed}. 
Few general statements can be made, and bounds in the following sections are 
established on a case-by-case basis. On the contrary, simple bounds for the norm 
of the matrix are available. They are inherited from the boundedness of the 
operator in the continuous setting. This is known and commonly used in the 
analysis of Galerkin methods, yet often overlooked in other contexts. We therefore 
discuss this here in more generality. 

As a necessary assumption for most standard stability theorems concerning the 
solution of the original equation \cite[Chapter 6]{canuto_quarteroni}, 
\cite[Chapter 1]{finite_elements_for_elliptic_problems}, we assume the operator 
$\mathcal{L} : V \rightarrow W$ to be \emph{continuous}, i.e. there is a 
constant $C_{\mathcal{L}}>0$ such that
\begin{align}\nonumber
    \Vert \mathcal{L} v \Vert_W \leq C_{\mathcal{L}} \Vert v \Vert_V, 
    \quad \text{ for all } v \in V,
\end{align}
where $\Vert \cdot \Vert_W$ and $\Vert \cdot \Vert_V$ are the norms induced 
by the scalar products on $W$ and $V$. The infimum of all such constants 
$C_{\mathcal{L}}$ is called the \emph{norm} of the operator, and we 
denote it by $\Vert \mathcal{L} \Vert$. Note that this norm depends on the 
norms of the spaces $V$ and $W$. Since $\Vert \cdot \Vert_W$ is induced 
by a scalar product, the norm of $\mathcal{L}$ is given by
\begin{align}\nonumber
    \Vert \mathcal{L} \Vert = \sup_{\substack{v \in V \\ v \neq 0}} \frac{\Vert 
    \mathcal{L} v \Vert_W}{\Vert v \Vert_V} = \sup_{\substack{w \in W \\ w \neq 
    0}} \sup_{\substack{v \in V \\ v \neq 0}} \frac{(w, \mathcal{L} v)_W}{\Vert 
    w \Vert_W \Vert v \Vert_V}.
\end{align}

An analogous statement holds for the norm of a matrix. We formulate it here for 
the Euclidean norm on $\reals^n$:
\begin{align}\nonumber
    \Vert \mathbf{L} \Vert_2 = \max_{\substack{\mathbf{v} \in \reals^n}} 
    \frac{\Vert \mathbf{L} \mathbf{v} \Vert_2 }{\Vert \mathbf{v} \Vert_2} = 
    \max_{\substack{\mathbf{w} \in \reals^n \\ \mathbf{w} \neq 0}} 
    \max_{\substack{\mathbf{v} \in \reals^n \\ \mathbf{v} \neq 0}} 
    \frac{\mathbf{w}^\top \mathbf{L} \mathbf{v}}{\Vert \mathbf{w} \Vert_2 
    \Vert \mathbf{v} \Vert_2}.
\end{align}

Inserting the definition (\ref{eq:linear-system-noprec-matrix}) of the 
matrix-representation of $\mathcal{L}$, we obtain
\begin{align}
    \Vert \mathbf{L} \Vert_2 &= \max_{\substack{\mathbf{w} \in \reals^n \\ \mathbf{w} \neq 0}} \max_{\substack{\mathbf{v} \in \reals^n \\ \mathbf{v} \neq 0}} \frac{\mathbf{w}^\top \mathbf{L} \mathbf{v}}{\Vert \mathbf{w} \Vert_2 \Vert \mathbf{v} \Vert_2} \nonumber
= \max_{\substack{\mathbf{w} \in \reals^n \\ \mathbf{w} \neq 0}} \max_{\substack{
\mathbf{v} \in \reals^n \\ \mathbf{v} \neq 0}} \dfrac{\sum_{j=1}^n\sum_{k=1}^n
(w_j \psi_j, \mathcal{L} v_k \phi_k)_W}{\Vert \mathbf{w} \Vert_2 \Vert \mathbf{v} \Vert_2} \nonumber\\
    &\leq \Vert \mathcal{L} \Vert \max_{\substack{\mathbf{w} \in \reals^n \\ \mathbf{w} \neq 0}} \frac{\left\Vert \sum_{j=1}^n w_j \psi_j \right\Vert_W}{\Vert \mathbf{w}_2 \Vert_2} \max_{\substack{\mathbf{v} \in \reals^n \\ \mathbf{v} \neq 0}} \frac{\left\Vert \sum_{k=1}^n v_k \phi_k \right\Vert_V}{\Vert \mathbf{v}_2 \Vert_2} \nonumber\\
    &= \Vert \mathcal{L} \Vert \tilde{\Gamma}_\psi(n) \tilde{\Gamma}_\phi(n). \label{eq:matrix-norm-bound}
\end{align}

Here, we make use of the fact that on $\reals^n$ the norms induced by the bases 
$\phi_j$ and $\psi_k$ are \emph{equivalent} to the Euclidean norm. In other 
words, for each $n \in \naturals$, there are constants $\tilde{\gamma}_\phi(n)
\leq \tilde{\Gamma}_\phi(n)$ and $\tilde{\gamma}_\psi(n) \leq 
\tilde{\Gamma}_\psi(n)$ such that
\begin{align}\nonumber
    \tilde{\gamma}_\phi(n) \Vert \mathbf{v} \Vert_2 \leq \left\Vert \sum_{k=1}^n 
    w_k \phi_k \right\Vert_V \leq \tilde{\Gamma}_\phi(n) \Vert \mathbf{v} \Vert_2,
\end{align}
and analogously for $W$. The equivalence of all norms on a finite-dimensional 
vector space does not mean that these constants do not depend on $n$. In fact, 
by equation (\ref{eq:matrix-norm-bound}), the scaling of $\tilde{\Gamma}_\phi$ 
and $\tilde{\Gamma}_\psi$ with $n$ is the determining factor for the growth of 
the norm of the matrix $\mathbf{L}$, as $\Vert \mathcal{L} \Vert$ does not depend 
on $n$. Note that, in addition to $n$, these constants depend on the choice 
of the basis and on the norm of the spaces $V$ or $W$. Often, results of the form 
$\tilde{\Gamma}_\phi(n) = \Gamma_\phi n^\mu$ are available, where $\mu \in \integers
$, explicitely stating the $n$-dependence of the norm equivalence.

We immediately obtain a desirable criterion for the spaces $V$ and $W$ and for 
the trial and test functions: The test functions must be chosen such that their 
growth is limited in the norms on $V$ and $W$, with respect to which $\mathcal{L}$ 
must be bounded. 

One way to guarantee this criterion is by dividing each basis element by their 
norm. This corresponds to diagonal preconditioning. For some problems, this 
resolves the problem of exploding norms: See for example \cite{bank_scott_1989}, 
who discuss diagonal preconditioning for finite-element methods with highly 
refined meshes. In other cases, such preconditioning negatively impacts the 
norm of the inverse of $\mathbf{L}$, and thus leads to minor or no 
improvements of the condition number. We discuss such an application in 
Section \ref{chap:fem}.

The above process shows the beauty of the full operator approach for 
preconditioning, as important bounds can be derived directly from the operator 
properties.

%%%%%%%%%%%%%%%%%%%%%%%%%%%%%%%%%%%%%%%%%%%%%%%%%%%%%%%%%%%%%%%%%%%%%%%%%%%%%%%
% \input{text/text-3-interpolation}
\section{FOP for polynomial interpolation in $1D$} 
\label{chap:polynomial-interpolation}

We begin our discussion with a very simple and well-known numerical task: the 
\emph{polynomial interpolation} of a function $f$ on the interval $[-1, 1]$ by 
a polynomial $q$. 

In order to formulate this problem, we introduce some notation. For $n \in 
\naturals$, let $\mathbb{P}_{n-1}$ be the space of polynomials of degree up to 
$n-1$, and $\lbrace \px_j \rbrace_{j=1}^n$ be a set of $n$ predetermined 
nodes in $[-1, 1]$. For $q \in \mathbb{P}_{
n-1}$, we require that $q$ interpolates $f$ at $\lbrace \px_j \rbrace_{j=1}^n$:
\begin{align}\label{eq:interpolation-condition}
    q(\px_j) = f(\px_j), \quad \text{ for all } j \in \{1, \dotsc, n\}.
\end{align}

By choosing a basis $\lbrace \phi_i \rbrace_{i=1}^n$ of $\mathbb{P}_{n-1}$, we 
can write the interpolant $q \in \mathbb{P}_{n-1}$ as $q(x) = \sum_{j=1}^{n} q_j 
\phi_j(x)$. Then, plugging this into \eqref{eq:interpolation-condition}, we obtain 
the linear system
\begin{align}
    \underbrace{
    \begin{pmatrix}
        \phi_0(\px_1) & \phi_1(\px_1) & \dotsc & \phi_{n-1}(\px_1) \\
        \phi_0(\px_2) & \ddots & & \phi_{n-1}(\px_2) \\
        \vdots & & \ddots & \vdots \\
        \phi_0(\px_{n}) & \phi_1(\px_{n}) & \dotsc & \phi_{n-1}(\px_{n})
    \end{pmatrix}
    }_{=: \mathbf{L}_{\phi}}
    \begin{pmatrix} q_0 \\ q_1 \\ \vdots \\ q_{n-1} \end{pmatrix}
        = 
    \begin{pmatrix} f(\px_1) \\ f(\px_2) \\ \vdots \\ f(\px_{n}) \end{pmatrix} 
    \label{eq:polynomial-interpolation-linear}
\end{align}
for the coefficients of $q$. We call $\mathbf{L}_{\phi}\in \mathbb{R}^{n\times n}$ 
the \emph{interpolation matrix} (with respect to the basis  $\lbrace \phi_i 
\rbrace_{i=0}^{n-1}$). If the interpolation nodes $\lbrace \px_j \rbrace_{j=1}^n$ 
satisfy $\px_i\neq \px_k$ for $i \neq k$, then 
\eqref{eq:polynomial-interpolation-linear} 
has a unique solution \cite[Theorem 3.1]{atkinson_book}. However, we point out 
that the condition number of the matrix $\mathbf{L}_{\phi}$ determines the 
accuracy and speed with which the linear system can be solved.

A straightforward choice for the basis for $\mathbb{P}_{n-1}$ is the set of 
monomials $\lbrace \mu_i \rbrace_{i=0}^{n-1}$, where $\mu_i(x) := x^{i}, \: i
\in \naturals_+:=\naturals\cup\{0\}$. Under this choice, the matrix $\mathbf{L}_{
\mu}$ is called the \emph{Vandermonde matrix} of the points set $\lbrace \px_j 
\rbrace_{j=1}^n$. Unfortunately, despite the simplicity of its structure, it is 
known for being notoriously hard to solve \cite{Gautschi_1974}. In fact, for 
most choices of $\px_j \in \mathbb{R}$, the condition number of the Vandermonde 
matrix can be shown to grow at least exponentially in $n$ 
\cite{beckermann2000condition}.

Other combinations of basis polynomials and interpolation points can lead to 
better condition numbers. Consider, for instance, the \emph{Chebyshev 
polynomials of the first kind}, defined by
\begin{align}\nonumber
    T_k(x) = \cos(k \arccos(x)), \qquad x \in [-1, 1], \: k\in \naturals_+.
\end{align}
Note that for each $k \in \naturals_+$, $T_k$ is a polynomial of 
degree $k$ \cite[Chapter 3]{trefethen_atap}. Thus, $\lbrace T_k \rbrace_{k=0
}^{n-1}$ forms a basis of the space $\mathbb{P}_{n-1}$. Therefore, we can 
define $\mathbf{L}_{T(n)}$ as the interpolation matrix constructed using 
$\lbrace T_{i} \rbrace_{i=0}^{n-1}$ and the \emph{degree-$n$ Chebyshev nodes}
\begin{align}
    \px_j = \cos\left( \frac{2j - 1}{2(n+1)} \right), \quad \text{ for } 
    j \in \{1, \dotsc, n\},\label{eq:chebpts}
\end{align}
as interpolation nodes. It is known that the matrix $\mathbf{L}_{T(n)}$ is well 
conditioned \cite{strang1999discrete} 
with $\kappa\left(\mathbf{L}_T(n)\right) = \sqrt{2}$ for all $n \in \naturals$.

Although these facts about $\mathbf{L}_{\mu}$ and $\mathbf{L}_{T(n)}$ are 
well known, we believe that FOP brings a new insight 
as to why one system behaves numerically so much better than the other.

%%%%%%%%%%%%%%%%%%%%%%%%%%%%%%%%%%%%%%%%%%%%%%%%%%%%%%%%%%%
\subsection{Polynomial interpolation as discretization of  an operator 
equation} \label{sec:interpolation-operator-equation}

The task of polynomial interpolation can be understood as discretizing the 
operator equation
\begin{align}\nonumber
    \mathcal{I} u = f,
\end{align}
with the identity operator $\mathcal{I}$ and a particular choice of (suitable) 
trial and test spaces $V$ and $W$, respectively. Let $\lbrace \phi_i \rbrace_{
i=0}^{n-1}$ be a basis of $\mathbb{P}_{n-1}$ and define the \emph{trial-space 
basis operator} $\mathcal{X}_\phi : \reals^{n} \rightarrow \mathbb{P}_{n-1}$ as
\begin{align}\nonumber
   \mathcal{X}_\phi \mathbf{u} = \sum_{k=0}^{n-1} u_k \phi_k.
\end{align}
Further, let $\lbrace \px_j \rbrace_{j=1}^n$ be the set of $n$ interpolation 
points. We define the \emph{test operator} $\mathcal{W}_{\px}: W \rightarrow 
\reals^n$ as
\begin{align}\nonumber
    \mathcal{W}_{\px} f = (\delta_{\px_1}(f), \dotsc, \delta_{\px_n}(f))^\top,
\end{align}
where $\delta_{\px}$ is the delta distribution centered at the point $\px$. 
Then, the linear system \eqref{eq:polynomial-interpolation-linear} 
for this discretization can be rewritten as
\begin{align}\label{eq:polynomial-interpolation-discretized}
\mathbf{L}_{\phi} \mathbf{q} = \mathbf{f},
\end{align}
with $\mathbf{f} = \mathcal{W}_{\px} f$, and $\mathbf{L}[i,j]:= (\mathcal{W}_{
\px} \mathcal{I} \mathcal{X}_\phi)[i,j] = \delta_{\px_j}(\phi_i)$ for $i\in 
\{ 0, \dots, n-1 \}, j\in 
\{ 1, \dots, n \}.$

It is clear that choosing a different basis $\lbrace \Phi_i \rbrace_{i=0}^{n-1}$ 
for $\mathbb{P}_{n-1}$ defines a new trial-space basis operator $\mathcal{X}_\Phi$ 
and also a new matrix $\mathbf{L}_\Phi = \mathcal{W}_{\px} \mathcal{I} 
\mathcal{X}_\Phi$.

\subsection{FOP vs. matrix preconditioning} 
\label{sec:interpolation-operator-and-matrix-preconditioning}
Now suppose that $\mathbf{L}_\phi$ is ill-conditioned, whereas  $\mathbf{L}_\Phi$ 
is well-conditioned. Then it is desirable to solve systems involving the matrix 
$\mathbf{L}_\Phi$ rather than $\mathbf{L}_\phi$. This can be achieved by 
right-preconditioning: instead of solving 
\eqref{eq:polynomial-interpolation-discretized}, we work with
\begin{align}\nonumber
\mathbf{L}_\Phi \mathbf{v} = \mathbf{L}_\phi \mathbf{R}_{\Phi}^{\phi} \mathbf{v} 
= \mathbf{f},
\end{align}
where $\mathbf{R}_{\Phi}^{\phi} = \mathcal{X}_\phi^{-1} \mathcal{X}_\Phi \in 
\mathbb{R}^{n\times n}$ is the basis transformation from $\lbrace \Phi_i 
\rbrace_{i=0}^{n-1}$ to $\lbrace \phi_i \rbrace_{i=0}^{n-1}$. One could then 
obtain the original vector $\mathbf{q} = \mathbf{R}_{\Phi}^{\phi} \mathbf{v}$, 
though this could involve an ill-conditioned basis transformation (so it is 
advisable to work with the well-conditioned basis $\Phi$ as much as possible). 

There are two options to implement such right preconditioning. One the one hand, 
one could employ matrix preconditioning, i.e. discretizing first, and multiplying 
the matrices afterward. When the matrices $\mathbf{L}_\phi$ and $\mathbf{R}_{
\Phi}^{\phi}$ are known, this amounts to numerical matrix multiplication 
$\mathbf{L}_\phi \mathbf{R}_{\Phi}^{\phi}$ for direct solution or the application 
of an iterative solver employing one matrix-vector multiplication with both 
$\mathbf{L}_\phi$ and $\mathbf{R}_{\Phi}^{\phi}$ per iteration. On the other hand, 
one could compute the matrix $\mathbf{L}_\Phi$ directly---in this case by evaluating 
the polynomial basis $\Phi_k$ at the interpolation nodes---and employing a 
numerical solver, which avoids matrix multiplication with $\mathbf{L}_\phi$. 
Almost always, the first alternative does nothing to improve the accuracy of the 
final result. 

Let us illustrate this with $\mathbf{L}_\mu$ and $\mathbf{L}_{T(n)}$. For this, 
the matrix $\mathbf{C} := \mathbf{R}_{\mu}^{T(n)}$ can be computed explicitly, 
e.g.~\cite[Ch.~2]{trefethen_atap}. Therefore, a polynomial in 
$\mathbb{P}_{n-1}$ given by its vector of coefficients $\mathbf{u} \in \reals^n$ 
in the Chebyshev basis has coefficients $\mathbf{C}^{\top} \mathbf{u} = 
(\mathcal{X}_{\mu}^{-1} \mathcal{X}_{T(n)}) \mathbf{u}$ in the monomial basis.

We proceed to compare solving $\mathbf{L}_\mu \mathbf{x} = \mathbf{b}$  and 
$\mathbf{L}_{T(n)} \mathbf{x} = \mathbf{b}$ with GMRES. We start by generating 
a function with prescribed coefficients in the monomial basis $\sum_{k=0}^{n-1} 
q_k \phi_k$ in which the coefficients are drawn from the standard normal 
distribution $q_k\sim N(0,1)$, and compute the right-hand side $f(\px_1)$ 
via~\eqref{eq:polynomial-interpolation-linear}, taking $\px_i$ to be the 
Chebyshev nodes~\eqref{eq:chebpts}. We then solve the linear system 
\eqref{eq:polynomial-interpolation-linear} using (a maximum of $n$ steps of) GMRES, 
without and with right preconditioning $\mathbf{C}$. As the focus is to examine 
the best possible accuracy, we ran GMRES with the tightest tolerance: the 
convergence tolerance is set to $\epsilon$ and maximum number of iteration $n$.
For reference we also present the analogous result with (well-conditioned) 
Chebyshev coefficients (without preconditioning), wherein the 'exact' coefficients 
are obtained using Chebfun~\cite{chebfunofficial}. The results are shown in 
Figure~\ref{fig:mcp-error2}.

\begin{figure}[h!] \centering
\includegraphics[width=0.7\textwidth]{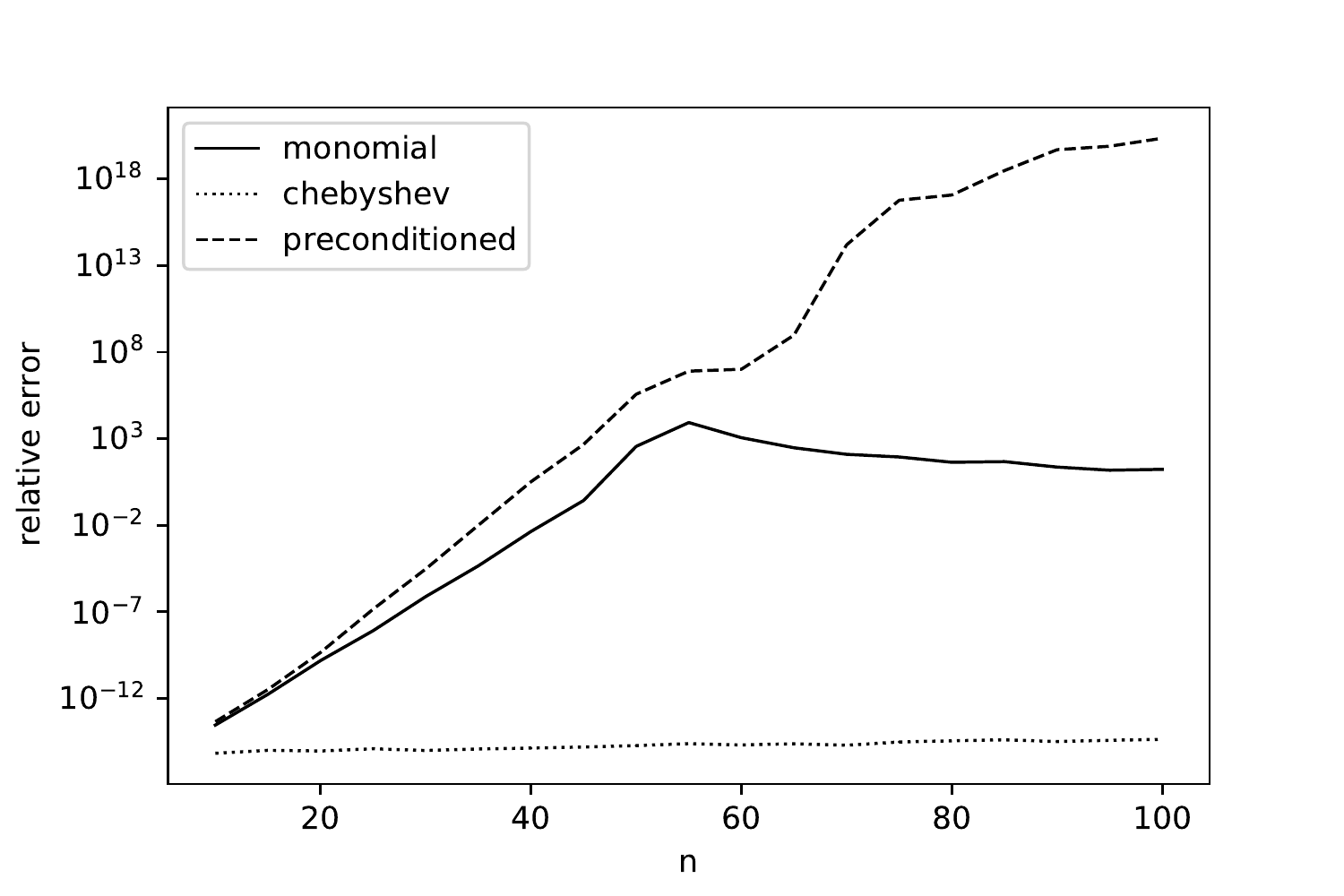}
\caption{\textbf{Relative 2-errors of the computed solutions averaged over 100 draws}. 
Solutions for $\mathbf{L}_\mu \mathbf{x} = \mathbf{b}$ (monomial basis, with and 
without preconditioner $\mathbf{C}$), and $\mathbf{L}_{T(n)} \mathbf{x}_T = 
\mathbf{b}$ (Chebyshev basis) were obtained with GMRES. $\mathbf{x}$ 
and $\mathbf{q}$ are sampled randomly from an $n$-dimensional standard normal 
distribution, and the same right-hand side $\mathbf{b}$ is used for the three 
linear systems. 
}
\label{fig:mcp-error2}
\end{figure}
\begin{figure}[h!] \centering
\includegraphics[width=0.8\textwidth]{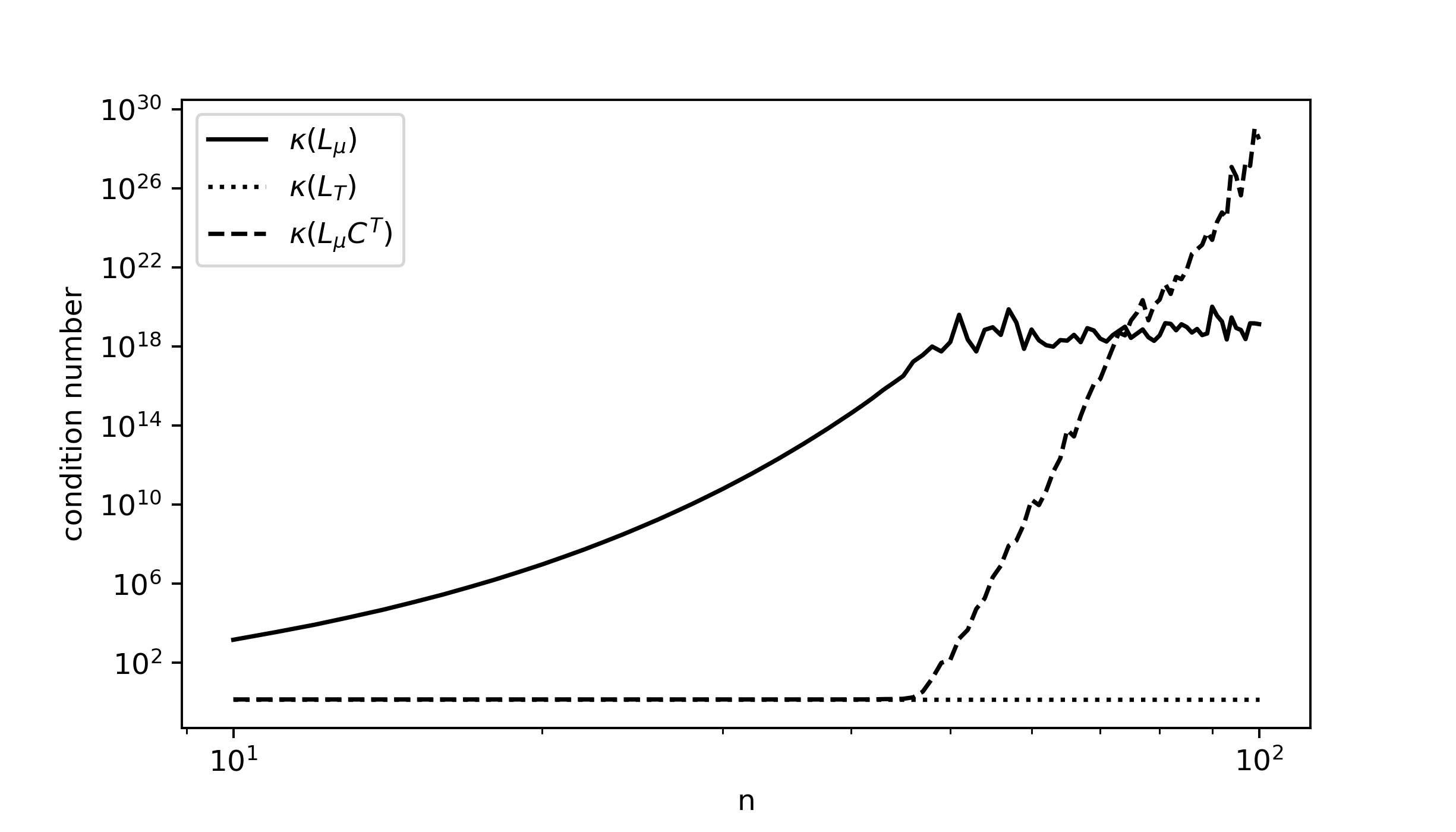}
\caption{Condition numbers of $\mathbf{L}_\mu$, $\mathbf{L}_{T(n)}$ and $\mathbf{L}_\mu 
\mathbf{C}^{\top}$.}
\label{fig:mcp-condition-number}
\end{figure}

As expected, numerical error limits the use of matrix-level preconditioning for 
improving accuracy. Figure \ref{fig:mcp-condition-number} shows the condition 
numbers of the monomial interpolation matrix $\mathbf{L}_\mu$, the Chebyshev 
interpolation matrix $\mathbf{L}_{T(n)}$, and the matrix obtained by multiplying 
$\mathbf{L}_\mu$ and $\mathbf{C}^{\top}$ with finite-precision arithmetic for $n$ 
up to $70$. $\mathbf{C}$ is obtained by the above recursion relation. As expected, 
the condition number of the Vandermonde matrix $\kappa(\mathbf{L}_\mu)$ grows 
exponentially, while $\kappa(\mathbf{L}_{T(n)})$ stays constant. Matrix 
preconditioning is stable until $n\approx 45$, beyond which the associated 
condition number increases unpredictably, even surpassing that of the original 
matrix $\mathbf{L}_\mu$. As the condition number $\kappa(\mathbf{L}_\mu)$ climbs 
to $10^{18}>1/\epsilon$, neither $\kappa(\mathbf{L}_\mu)$ nor the condition number 
of the product with the preconditioner can be expected to be numerically accurate. 
This explains the flattening of $\kappa(\mathbf{L}_\mu)$.

%%%%%%%%%%%%%%%%%%%%%%%%%%%%%%%%%%%%%%%%%%%%%%%%%%%%%%%%%%%
\subsection{Analysis and Discussion}

In order to study the condition numbers of interest, we make use of the fact 
that $\kappa(\mathbf{L}) = \Vert\mathbf{L}\Vert_2 \Vert\mathbf{L}^{-1}\Vert_2$ 
and turn our attention to the norm of the matrices $\mathbf{L}_{\mu}$, 
$\mathbf{L}_{T(n)}$ and their inverses. 
Moreover, we leverage the operator perspective from 
\eqref{eq:polynomial-interpolation-discretized} to find estimates for these 
norms following the spirit of equation \eqref{eq:matrix-norm-bound}. 

First, the Sobolev embedding theorem guarantees that functions 
$f \in H^1(-1, 1)$ are almost everywhere equal to a continuous function. Moreover, 
there is a constant $\alpha > 0$, independent of $f$, such that 
\cite[Chapter 7]{Continuity_Sobolev_Functions}
\begin{align}\label{eq:bound-beta}
    \Vert f \Vert_{L^\infty(-1, 1)} \leq \alpha \Vert f \Vert_{H^1(-1, 1)}, 
    \quad \text{ for all } f \in H^1(-1, 1).
\end{align}

By letting the delta distributions act on the continuous representation of 
functions in $H^1(-1, 1)$, they belong to the space $H^{-1}(-1, 1)$ with norm
\begin{align}
\Vert \delta_{\px_j} \Vert_{H^{-1}(-1, 1)} = \sup_{v \in H^1(-1, 1)}\frac{
\delta_{\px_j}(v)}{\Vert v \Vert_{H^1(-1, 1)}} \leq \sup_{v \in H^1(-1, 1)} 
\frac{\Vert v \Vert_{L^\infty(-1, 1)}}{\Vert v \Vert_{H^1(-1, 1)}} \leq \alpha. 
\label{eq:delta-distributions-continuous}
\end{align}
Then, for any interpolation basis $\lbrace\phi_i \rbrace_{i=1}^n$ in 
$H^1(-1, 1)$, we obtain
\begin{align}\nonumber
\begin{split}
\Vert \mathbf{L}_{\phi} \Vert_2 &= \max_{\mathbf{v} \in \reals^n} 
\max_{\mathbf{w} \in \reals^n} \frac{\mathbf{w}^{\top} \mathbf{L}_{\phi} \mathbf{v}}
{\Vert \mathbf{w}\Vert_2 \Vert \mathbf{v} \Vert_2}
= \max_{\mathbf{v} \in \reals^n} \max_{\mathbf{w} \in \reals^n} \sum_{j, k} 
\frac{w_j v_k \delta_{\px_j}(\phi_k)}{\Vert \mathbf{w}\Vert_2 \Vert \mathbf{v}
\Vert_2} \\
&\leq \max_{\mathbf{v} \in \reals^n} \max_{\mathbf{w} \in \reals^n} \sum_{j, k} 
\frac{w_j v_k \alpha \Vert \phi_k \Vert_{H^1(-1, 1)}}{\Vert \mathbf{w}\Vert_2 
\Vert \mathbf{v}\Vert_2}  
\leq \alpha \max_{1\leq l \leq n} \Vert \phi_l \Vert_{H^1(-1, 1)},
\end{split}
\end{align}
where in the last line, we used \eqref{eq:bound-beta} and the Cauchy-Schwarz 
inequality.

For the monomials in $[-1,1]$, since $\delta(\phi_k) \leq 1$, this estimate is 
easily improved to $\Vert \mathbf{L}_{\mu} \Vert \leq n$, which is still not 
sharp. Thus, the norm of $\mathbf{L}_{\mu}$ is relatively well-behaved. This 
implies that the exponential increase in the condition number of the Vandermonde 
matrix $\kappa(\mathbf{L}_{\mu})$ comes from the contribution of the inverse, 
i.e., the presence of small singular values. 

\begin{remark}
 We observe that the boundedness of the condition number results from two key 
properties:
\begin{itemize}
 \item[(P1)]  the orthogonality of the columns in the interpolation matrix; and 
 \item[(P2)] the controlled behavior of the norms of the columns. \\
\end{itemize}

In view of (P1), one could want to extend the idea of orthogonal-column 
interpolation matrices $\mathbf{L}_{\Phi}$ to other choices of trial 
bases. 
This can easily be done for other orthogonal polynomials, 
see \cite{stephanmscthesis} for a detailed discussion.
\end{remark}

%%%%%%%%%%%%%%%%%%%%%%%%%%%%%%%%%%%%%%%%%%%%%%%%%%%%%%%%%%%%%%%%%%%%%%%%%%%%%%%
% \input{text/text-4-spectral}
\section{FOP for spectral methods} \label{chap:spectral}

In the next two sections, we give examples of FOP in the context of  differential 
equations. In this section, we focus on spectral methods, and we turn to finite 
element methods in the next.

Spectral methods are known for their excellent convergence properties 
\cite[Chapter 4]{trefethen_spectral}. If the solution of the problem is analytic, 
the error decreases faster than any negative power of the discretization size. 
However, in their most straightforward implementation, spectral methods suffer 
from a fast increase of the condition number, leading to slow convergence and 
numerical instabilities. For high-order differential operators, or if high 
accuracy is desired, this quickly becomes prohibitive. The combination of these 
properties makes spectral methods ideal candidates for FOP.

Several preconditioning techniques have been presented in the literature. Basic 
examples include low-order finite element or finite difference preconditioners, 
or spectral discretizations of the differential operator with variable 
coefficients replaced by constants \cite[\S~4.4]{canuto_quarteroni}. These methods 
rely on linear operations which can only be computed with finite precision. As 
discussed in Section \ref{sec:matrix-preconditioning-does-not-work} and exemplified 
in Section \ref{sec:interpolation-operator-and-matrix-preconditioning}, 
preconditioning only improves accuracy if no multiplication between 
ill-conditioned matrices takes place.

One family of methods that achieve accuracy improvement is known as integration 
preconditioning \cite{olver_townsend}. These methods use relations between the 
spectral basis and the derivatives of its elements, which under certain conditions 
form orthogonal global bases on their own. An early version of integration 
preconditioning was presented by Clenshaw~\cite{clenshaw_1957}, and the method 
was later developed in 
\cite{integration_spectral_tau_proceedings, integration_spectral_tau_nasa, integral_preconditioning_2}. 
We focus here on one particular realization given by Olver and Townsend \cite{olver_townsend}, 
which uses the relationship between Chebyshev and ultraspherical polynomials.
We add a new viewpoint to the analysis by explicitly formulating the operators 
that are used for FOP. This allows us to identify them as generalized integral 
operators and to show they meet the desired criteria laid out in 
Section \ref{sec:desired-properties}.

%%%%%%%%%%%%%%%%%%%%%%%%%%%%%%%%%%%%%%%%%%%%%%%%%%%%%%%%%%%
\subsection{Unpreconditioned spectral methods} 
\label{sec:spectral-unpreconditioned}

Let $\mathcal{L}$ be a linear differential operator of order $N$. As before, we 
limit ourselves to the interval $[-1, 1]$. We assume the leading-order coefficient 
to be non-singular, so that without limitation of generality we may write the 
operator in normalized form
\begin{align}
    \label{eq:spectral-definition-differential-operator}
    \mathcal{L} = \diff{N} + a_{N-1} \diff{N-1} + \dots + a_1 \diff{1} + a_0,
\end{align}
with continuous functions $a_{N-1}, \dots, a_0 : [-1, 1] \rightarrow \reals$.

We want to solve the problem
\begin{align}\nonumber
\begin{split}
\mathcal{L} u = f, \quad \text{ for } x \in (-1, 1), \quad \text{ such that } 
\mathcal{B} u = \mathbf{c},
\end{split}
\end{align}
where $f : [-1, 1] \rightarrow \reals$, $\mathbf{c} \in \reals^N$ and $\mathcal{B}$ 
is a linear operator imposing $N$ linearly independent (boundary) constraints 
on the solution $u$.

Choosing Chebyshev polynomials $\phi_k = T_k$ as the trial basis, we search for 
an approximate  solution in the finite-dimensional space $V_n = \text{span}\{
\phi_i\}_{i=0}^{n-1}$. As the trial basis, projection onto the functions $T_k / 
\Vert T_k \Vert_{L^2(\rho)}^2$ is common, which decomposes the result of the 
application of the differential operator in terms of the Chebyshev polynomials 
and leads to the representation matrix
\begin{align}\nonumber
    \mathbf{L}[i,j] = \frac{(\phi_i, \mathcal{L} \phi_j)_{L^2(\rho)}}{\Vert 
    \phi_i \Vert_{L^2(\rho)}^2}.
\end{align}

For this choice, differentiation is represented by a dense upper-triangular 
matrix $\mathbf{D}$, found for example in \cite{introduction_spectral_methods}. 
Further, due to the convolution formula for Chebyshev polynomials
\begin{align}
    2 T_m T_{k} = T_{m+k} + T_{|m - k|}, \label{eq:chebyshev-convolution}
\end{align}
multiplication with polynomial coefficient functions $a_j$ is replaced by 
multiplication with banded matrices $\mathbf{M}(a_j)$ with bandwidth $2 
\text{ deg}(a_j) + 1$ \cite{integration_spectral_tau_proceedings}. Nonpolynomial 
functions are first expanded in terms of Chebyshev polynomials up to machine 
accuracy and then converted into matrix form.

To make the solution of the system of equations unique, \emph{boundary conditions} 
need to be incorporated. For this, we make use of the method of boundary bordering: 
The last $N$ rows of the $n \times n$ matrix $\mathbf{L}$ are omitted and replaced, 
conventionally swapped to the top of the linear system, by the $N$ linearly 
independent equations coming from the application of $\mathcal{B}$ to the 
approximation $u = \sum u_j \phi_j$. We denote the matrix $\mathbf{L}$ with the 
last $N$ rows left out as $\mathbf{L}_{[N]}$, such that we obtain the system 
$\mathbf{A} \mathbf{u} = \mathbf{b}$ with\footnote{Note that only in this section 
we use $\mathbf{A}$ instead of $\mathbf{L}$ for the coefficient matrix; this is 
because $\mathbf{A}$ contains rows that explicitly reflect the boundary 
conditions, in addition to the discretized operator $\mathcal{L}$. Elsewhere, 
boundary conditions are not explicitly in $\mathbf{L}$, either because they are 
not present or because the basis functions satisfy them.}
\begin{align}\nonumber
\mathbf{A} := 
\begin{pmatrix} 
    \mathcal{B} \phi_0 & \mathcal{B} \phi_1 & \dots & \mathcal{B} \phi_{n-1}  \\
    \mathbf{L}_{[N]} 
\end{pmatrix},
\: \text{ and } \: \mathbf{b}:=
    \begin{pmatrix}
        \mathbf{c} \\
        (\psi_0, f) / (\psi_0, \psi_0) \\
        \hdots \\
        (\psi_{n-N-1}, f) / (\psi_{n-N-1}, \psi_{n-N-1}) 
    \end{pmatrix}.
\end{align}

As a specific example, consider the differential equation
\begin{align}
    \diff{2} u + 10 \diff{} u + 100 \, x \, u = f, \quad \text{ such that } u(\pm 1) = 0.
    \label{eq:example-spectral}
\end{align}

\begin{figure}
\centering
\includegraphics[width=0.9\textwidth]{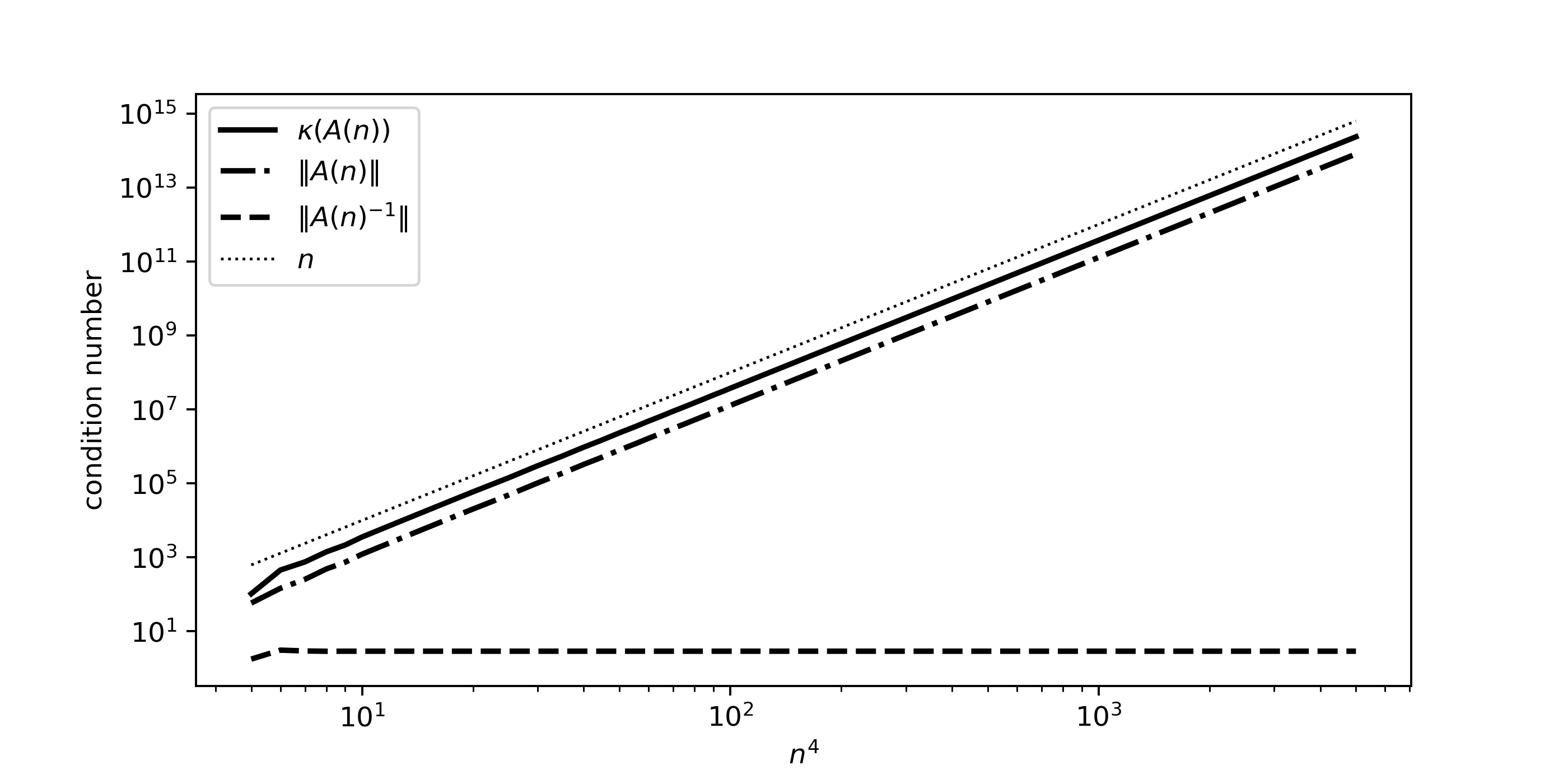}
\caption{Condition number $\kappa(\mathbf{A})$ and norms $\Vert \mathbf{A} 
\Vert_{2}$ and $\Vert \mathbf{A}^{-1}\Vert_{2}$ of the spectral representation 
matrix with Chebyshev basis for the differential equation \eqref{eq:example-spectral} 
with Dirichlet boundary conditions.}
\label{fig:example-spectral-condition-number}
\end{figure}

Figure \ref{fig:example-spectral-condition-number} shows the condition number of 
$\mathbf{A}$ as a function of the discretization size. We confirm that it increases 
as $\mathcal{O}(n^{2N})$ for a differential operator of order $N$ as predicted for 
Chebyshev polynomials \cite[\S~7.7]{boyd_book}.
Also displayed in the figure are the $2$-norms of the matrix $\mathbf{A}$ and of 
its inverse. While $\Vert \mathbf{A}^{-1} \Vert_{2}$ is nearly constant, the 
factor $\Vert \mathbf{A} \Vert_{2}$ is responsible for the rapid growth of the 
condition number $\kappa(\mathbf{A}) = \Vert \mathbf{A} \Vert_{2} \Vert \mathbf{
A}^{-1} \Vert_{2}$. Far from conclusive for general differential equations, this 
example shows that bounds on the norm of the matrix $\mathbf{L}$, such as the 
bound (\ref{eq:matrix-norm-bound}) inferred from the continuity of the operator, 
can be of use in the development and understanding of preconditioners.

%%%%%%%%%%%%%%%%%%%%%%%%%%%%%%%%%%%%%%%%%%%%%%%%%%%%%%%%%%%
\subsection{Ultraspherical polynomials} \label{sec:olver_townsend}

As already shown in Section \ref{chap:polynomial-interpolation}, structural 
properties of matrices representing the same operator but obtained with different 
trial and test bases may differ fundamentally. The method presented in 
\cite{olver_townsend} is a beautiful example of this. Retaining Chebyshev 
polynomials as the trial-space basis, they switch the test-space basis to 
ultraspherical polynomials, thereby reducing the growth of the condition number 
from $\mathcal{O}(n^{2N})$ to $\mathcal{O}(n)$. 

Futher, this may be seen as an application of FOP:
The three operations involved in constructing the spectral matrix 
$\mathbf{L}$---differentiation, multiplication and basis change---may be 
computed by a recursion. This allows the computation of the preconditioned 
matrix without any ill-conditioned matrix multiplication.

For $\lambda \in \naturals$, the \emph{ultraspherical polynomials} 
$(C^{(\lambda)}_k)_{k \in \naturals}$ of order $\lambda$ are defined as the 
family of polynomials orthogonal with respect to the $L^2$-scalar product 
with weight
\begin{align}\nonumber
    \rho^{(\lambda)}(x) = (1-x^2)^{\lambda - 1/2}, \quad x \in (- 1, 1)
\end{align}
and normalized such that
\begin{align}\nonumber
    C^{(\lambda)}_k(x) = \frac{2^k (\lambda + k - 1)!}{(\lambda-1)! k!} x^k 
    + \mathcal{O}(x^{k-1}).
\end{align}
Together with the Chebyshev polynomials, they fulfill the defining properties
\begin{align}
    \label{eq:ultraspherical-derivative}
    \frac{dC^{(\lambda)}}{dx} = \begin{cases}
        2 \lambda C^{(\lambda + 1)}_{k-1}, \quad &\text{ for } k \geq 1, \\
        0, \quad &\text{ for } k=0.
    \end{cases} \quad \text{ and } \quad 
    % \label{eq:chebyshev-derivative}
    \frac{dT_k}{dx} = \begin{cases} 
        k C^{(1)}_{k-1}, \quad &\text{ for } k \geq 1, \\
        0, \quad &\text{ for } k = 0,
    \end{cases}
\end{align}
such that $\lambda$-fold differentiation between Chebyshev and order-$\lambda$ 
ultraspherical polynomials is represented by the matrix
\begin{align}\nonumber
    \mathbf{D}_\lambda = 2^{\lambda-1} (\lambda-1)! 
\left (
\begin{array}{rrrrrrr}
\overmat{\lambda \text{times}}{0 & \dots & 0} &\lambda & & &  \\
%0 & \dots & 0 & \lambda & & & \\
        & & & & \lambda + 1 & &  \\
        & & & & & \lambda + 2 & \\
        & & & & & &  \ddots
\end{array}\right )
\end{align}

To compute the matrix representation $\tilde{\mathbf{L}}$ of the operator 
$\mathcal{L}$, coefficient functions $a_j$ are resolved to machine accuracy in 
terms of ultraspherical polynomials of order $j$. Due to a convolution formula 
similar to that for Chebyshev polynomials (\ref{eq:chebyshev-convolution}), 
multiplication by the expansion of $a_j$ can be written as a matrix $\mathbf{M
}_j(a_j)$ acting on the coefficients of an order-$j$ ultraspherical series, see 
equations (3.6) to (3.9) in \cite{olver_townsend}.
Coefficients of a $C^{(\lambda)}$-series are converted to a $C^{(\lambda+1)}
$-series by applying the matrix 
\begin{align}
    \label{eq:conversion-matrix-ultrasphericals}
    \mathbf{S}_\lambda = \begin{pmatrix}
        1 & & -\frac{\lambda}{\lambda + 2} & & & \\
        & \frac{\lambda}{\lambda+1} & & -\frac{\lambda}{\lambda+3} & & \\
        & & \frac{\lambda}{\lambda+2} & & -\frac{\lambda}{\lambda+4} & \\
        & & & \ddots & & \ddots
    \end{pmatrix},
\end{align}
to its vector of coefficients while a Chebyshev series can be converted to a 
$C^{(1)}$-series with the operator
\begin{align}
    \label{eq:conversion-matrix-chebyshev}
    \mathbf{S}_0 = \begin{pmatrix}
        1 & & -\frac{1}{2} & & & \\
        & \frac{1}{2} & & -\frac{1}{2} & & \\
        & & \frac{1}{2} & & -\frac{1}{2} & \\
        & & & \ddots & & \ddots
    \end{pmatrix}.
\end{align}

Combining these steps, the differential operator is found by computing
\begin{align}
\label{eq:preconditioned-matrix-spectral}
\tilde{\mathbf{L}} = \mathbf{D}_N + \mathbf{S}_{N-1} \mathbf{M}_{N-1}(a_{N-1}) 
\mathbf{D}_{N-1} + \dots + \mathbf{S}_{N-1}\dots \mathbf{S}_{0} \mathbf{M}_0(a_0)
\end{align}

The matrices $\mathbf{D}_j$ are composed of a single off-diagonal, while the matrices 
$\mathbf{S}_j$ constist of the main-diagonal and one off-diagonal. This results 
in a banded matrix $\tilde{\mathbf{L}}$.

As before, this matrix is truncated to size $(n-N) \times n$ and complemented with
the $N$ boundary conditions to obtain the system matrix $\tilde{\mathbf{A}}(n)$. 
Numerical experiments indicate that the condition number of these matrices grow as 
$\mathcal{O}(n)$ \cite[\S~3.3]{olver_townsend}. Applying the diagonal preconditioner
\begin{align}
    \mathbf{R} = \frac{1}{2^{j-1} (j-1)!} \, \text{diag} \left( \underbrace{1, 
    \dotsc, 1}_{N \text{ times}}, \frac{1}{N}, \frac{1}{N+1}, \dotsc \right) 
    \label{eq:olver-townsend-right}
\end{align}
from the right, it is shown in \cite[\S~4]{olver_townsend} that 
\begin{align}
    \label{eq:identity-plus-compact-spectral}
    \mathbf{A} \mathbf{R} = \mathbf{I} + \mathbf{K}_n,
\end{align}
where sequence of $n \times n$ matrices $\mathbf{K}_n$ converges to a compact 
operator $\mathcal{K} : \ell_\lambda^2 \rightarrow \ell_\lambda^2$ between the 
Hilbert spaces
\begin{align}\nonumber
    \ell_\lambda^2 = \left\{ \mathbf{u} = (u_k)_{k \in \naturals} : \Vert 
    \mathbf{u} \Vert_{\ell_\lambda^2} = \sqrt{\sum_{k=0}^\infty u_k^2 
    (1+k^2)^\lambda } \right\},
\end{align}
and the range of possible $\lambda$ determined by the boundary conditions 
$\mathcal{B}$. For Dirichlet boundary conditions, this includes $\lambda = 0$. 
By uniform convergence of orthogonal projection in the spectral bases, if 
$\mathcal{I} + \mathcal{K}$ is invertible, the condition number of the matrices 
$\mathbf{A} \mathbf{R}$ in the relevant $\ell_\lambda^2$-norm converges to that 
of $\mathcal{I} + \mathcal{K}$ \cite[\S~4]{olver_townsend}. In other words, 
growth of the condition number as $n \rightarrow \infty $ has been improved 
from $\mathcal{O}(n^{2N})$ to $\mathcal{O}(1)$ by a change of basis and the 
multiplication with a diagonal operator $\mathbf{R}$.

%%%%%%%%%%%%%%%%%%%%%%%%%%%%%%%%%%%%%%%%%%%%%%%%%%%%%%%%%%%%%%%%%%%%%%%%%%%%%%%
\subsection{Basis change as FOP}
\label{sec:generalized-integration-spectral}

While it is clear that the application of the operator $\mathcal{R}$ can be 
seen as a form of FOP, the FOP present in the basis change needs explicit 
formulation. We define the basis-change preconditioner
\begin{align}
    \label{eq:spectral-rl}
    \mathcal{R}_l(C^{(N)}_k) = T_k,
\end{align}
mapping an ultraspherical polynomial of order $N$ and degree $k$ to the 
Chebyshev polynomial of degree $k$. By extending $\mathcal{R}_l$ to linear 
combinations and then to series, the operator is well-defined on the space 
of $L^2(\rho^{(N)})$ with weight $\rho^{(N)}(x) = (1-x^2)^{N - 1/2}$.

Applying this operator from the left to the original equation $\mathcal{L} u 
= f$ and decomposing in terms of the Chebyshev test basis with the $\rho^{(0)}
$-scalar product leads to the same matrix as decomposing the original equation 
directly in terms of the ultraspherical test basis with the $\rho^{(N)}$-scalar 
product: Recall that the pure-Chebyshev method leads to the matrix
\begin{align}\nonumber
    \mathbf{L}[j,k] = \frac{(T_j, \mathcal{L} T_k)_{L^2(\rho^{(0)})}}{\Vert T_j 
    \Vert^2_{L^2(\rho^{(0)})}},
\end{align}
while the Chebyshev-ultraspherical method results in
\begin{align}\nonumber
\tilde{\mathbf{L}}[j,k] = \frac{(C^{(N)}_j, \mathcal{L} T_k)_{L^2(\rho^{(N)})}}{ 
\Vert C^{(N)}_j \Vert^2_{L^2(\rho^{(N)})}}.
\end{align}
Multiplying the original equation with $\mathcal{R}_l$ and using the pure-Chebyshev 
formula, we obtain
\begin{align}
    \label{eq:chebyshev-ultraspherical-change}
    \begin{split}
\frac{(T_j, \mathcal{R}_l \mathcal{L} T_k)_{L^2(\rho^{(0)})}}{ \Vert T_j \Vert^2_{
L^2(\rho^{(0)})} } 
& = \frac{(T_j, \mathcal{R}_l \sum_{m=0}^\infty C_m^{(N)} \tilde{\mathbf{L}}[m,k] 
)_{L^2(\rho^{(0)})} }{ \Vert T_j \Vert^2_{L^2(\rho^{(0)})} } \\
& = \sum_{m=0}^\infty \frac{(T_j, T_m \tilde{\mathbf{L}}[m,k])_{L^2(\rho^{(0)})} }
{ \Vert T_j \Vert^2_{L^2(\rho^{(0)})} }
= \tilde{\mathbf{L}}[j,k],
    \end{split}
\end{align}
the same matrix as in the Chebyhsev-ultraspherical case.

Hence, the basis change between Chebyshev and ultraspherical polynomials is a 
case of operator FOP. Together with the right-preconditioner
\begin{align}
    \label{eq:spectral-rr}
    \mathcal{R}_r(T_k) = \frac{1}{2^{N-1} (N-1)!} \begin{cases}
        T_k, \quad \text{ if } k < N, \\
        \frac{T_k}{k}, \quad \text{ if } k \geq N,
    \end{cases}
\end{align}
it serves to bound the operator $\mathcal{L}$ on the space $L^2(\rho^{(0)})$. 
Without preconditioning, the order-$N$ differential operator $\mathcal{L}$ is 
unbounded as an operator $\mathcal{L} : L^2(\rho^{(0)}) \rightarrow L^2(\rho^{(0)})$. 
After applying preconditioners from the left and the right, it is shown that 
the representation in terms of Chebyshev polynomials of the operator 
$\mathcal{R}_l \mathcal{L} \mathcal{R}_r : L^2(\rho^{(0)}) \rightarrow L^2(
\rho^{(0)})$ equals the identity plus a compact operator. By the isometry between 
$L^2(\rho^{(0)})$ and $\ell^2$, this implies the continuity of $\mathcal{R}_l 
\mathcal{L} \mathcal{R}_r$.

In fact, the two preconditioners serve as an $N$-fold integration, inverting the 
$N$-th derivative
\begin{align}\nonumber
    \mathcal{R}_l \frac{d^N}{dx^N} \mathcal{R}_r = \mathcal{I} : L^2(\rho^{(0)}) 
    \rightarrow L^2(\rho^{(0)}).
\end{align}
In this, aside from providing the factor $1/(2^{N-1} (N-1)!)$, the right 
preconditioner serves to counteract an assymetry in the definition of Chebyshev 
and ultraspherical polynomials. While the differentiation of an ultraspherical 
polynomial $C^{(\lambda)}_k$ does not lead to a factor depending on the degree 
$k$ of the polynomial, differentiating the Chebyshev polynomial $T_k$ does. This 
single linear scaling coming from the first change from $T_k$ to $C^{(1)}_k$-series 
is cancelled by $\mathcal{R}_r$.

In infinite-precision arithmetic, traditional matrix preconditioning with the 
truncated diagonal operator $\mathcal{R}$ from the right and the matrix version 
of $\mathcal{R}_l$ with entries
\begin{align}\nonumber
    \mathbf{R}_l[j,k] = (T_j, \mathcal{R}_l(T_k))_{L^2(\rho^{(0)})}
\end{align}
from the left would lead to a solution equivalent to that of the 
Chebyshev-ultraspheri-cal method. Given that precision is infinite, numerical error 
does not play a role, and a speedup of iterative methods would occur as predicted by 
the condition-number improvement induced by the preconditioning. 
In finite-precision arithmetic, this suffers from the multiplication of the 
ill-conditioned matrices $\mathbf{L}$ and $\mathbf{R}_l$. 

For FOP it is thus \emph{crucial} that the matrix $\tilde{\mathbf{L}}$ is computed 
to machine precision by use of recursion relations as in equation 
(\ref{eq:preconditioned-matrix-spectral}) instead of by evaluating the matrix 
product $\tilde{\mathbf{L}} = \mathbf{R}_l \mathbf{L}$.

Similarly, the right-hand side $f$ has to be discretized directly in terms of 
ultraspherical polynomials. The alternative, a conversion of the Chebyshev 
discretization $\mathbf{f}$ with components $(T_j, f)$ into the ultraspherical 
representation by forming the product with $\mathbf{R}_l$, suffers again from 
the bad conditioning of the matrix multiplication.

%%%%%%%%%%%%%%%%%%%%%%%%%%%%%%%%%%%%%%%%%%%%%%%%%%%%%%%%%%%%%%%%%%%%%%%%%%%%%%%
% \input{text/text-5-fem_new}
\section{FOP for finite-element methods} \label{chap:fem}

As laid out in the previous section, $N$-fold integration is a potential 
preconditioner for normalized differential operators of order $N$. In the context 
of spectral methods, we relied on recursive relationships between Chebyshev and 
ultraspherical polynomials to construct the algorithm. Now, we present an 
application of integration preconditioning for finite-element methods.

Here, we focus on fourth-order differential equations in one dimension with 
Dirichlet and Neumann boundary conditions, approximating functions by the 
\emph{cubic Hermite element} \cite[\S~3.2]{brenner2007mathematical}. 

In this section, we briefly give an overview of the treatment of fourth-order 
differential equations with the finite element method and then describe the 
algorithm used for FOP. We show that our new method successfully improves the 
accuracy of solutions to the biharmonic equation and other fourth-order linear 
differential equations by avoiding an otherwise catastrophic increase of the 
condition number.

%%%%%%%%%%%%%%%%%%%%%%%%%%%%%%%%%%%%%%%%%%%%%%%%%%%%%%%%%%%
\subsection{Finite elements for fourth-order differential equations} 
\label{sec:fem-unpreconditioned}

We consider linear, fourth-order differential equations of the form
\begin{align}\label{eq:pdefourth}
\begin{cases}
\mathcal{L} u = \diff{4} u + a_3 \diff{3} u + a_2 \diff{2} u 
+ a_1 \diff{1} u + a_0 u = f, \quad \text{ on } (-1, 1)\\
u(\pm 1) = u'(\pm 1) = 0,
\end{cases}
\end{align}
where $a_i: (-1, 1) \rightarrow \reals$, $i=0, \dotsc, 4$ are smooth functions.

Let $H^1(-1,1)$, $H^1_0(-1,1)$ and $H^2(-1,1)$ be Sobolev spaces defined as 
usual~\cite[Ch.~1]{brenner2007mathematical}, and $L^2(-1,1)$ be the space of 
square-integrable functions in $(-1,1)$. In addition, we introduce the 
space $H^2_0(-1,1):= \lbrace u \in H^2(-1,1) \, : \,: u(\pm 1) = u'(\pm 1) = 0
\rbrace$.

The \emph{weak formulation} corresponding to \eqref{eq:pdefourth} is: find $u$ 
such that
\begin{align}\nonumber
    \mathsf{a}(u, w) = (f, w)_{L^2(-1,1)}, \qquad \forall w \in H^2_0(-1,1),
\end{align}
where we introduced the bilinear form $\mathsf{a} : H^2_0(-1,1) \times H^2_0(-1,1) 
\rightarrow \reals $ defined as
\begin{align}\nonumber
\mathsf{a}(w, u) &= (w, \mathcal{L} u)_{L^2(-1,1)} \nonumber \\
&= (w'' - (a_3 w)', u'')_{L^2(-1,1)} + (-(a_2 w)' + a_1 w, u')_{L^2(-1,1)} 
\nonumber\\ &\quad+ (a_0 w, u)_{L^2(-1,1)},
\end{align}
For simplicity, we assume that the coefficient functions $a_j$ are such 
that the bilinear form $\mathsf{a}$ is continuous and elliptic in $H^2_0(-1,1)$.

We choose \emph{Hermite finite elements} for the trial and test basis 
\cite[Chapter 10]{lecture_notes_fem} on a uniform mesh for $(-1,1)$. As before, 
we turn the differential equation into a $2n \times 2n$ linear system $\mathbf{L} 
\mathbf{u} = \mathbf{b}$ with
\begin{align}\nonumber
\mathbf{L}[j,k] = \mathsf{a}(\phi_j, \phi_k)_{L^2(-1,1)}, \quad \text{ for } j, 
k \in \{1, \dotsc, 2n \}
\end{align}
and right-hand side
\begin{align}\nonumber
   \mathbf{b}[j] = (\phi_j, f)_{L^2(-1,1)}, \quad \text{ for } j \in \{1, \dotsc, 2n \}.
\end{align}

For the biharmonic equation with $a_j = 0$ for $j=0, \dotsc, 3$, the matrix 
$\mathbf{L}$ has condition number observed to be increasing proportionally to $n^4$. 
At the same time, the use of Hermite elements leads to fast convergence of the error: 
Figure \ref{fig:fem-biharmonic-nonpreconditioned} shows the relative error of the finite 
element approximation to the true solution $u = (1-x^2)^2$ measured in the $H^2$-norm,
\begin{align}\nonumber
    \Vert u \Vert_{H^2(-1,1)}^2 = \int_{-1}^1 \left( u^2 + (u')^2 + (u'')^2 \right) \dint x
\end{align}
and $L^2(-1,1)$ norm, respectively, as well as a multiple of the condition number. 
For $n < 1200$, the error decreases as $n^{-2}$. This is the expected discretization 
error in $H^2$-norm for Hermite elements \cite[Chapter 2.4]{stoer_bulirsch_book}. 
If $n$ is increased further, the error starts behaving erratically and begins to 
increase approximately proportional to $\kappa(\mathbf{L})$, with the constant 
of proportionality close to machine accuracy. This indicates that discretization 
error is being overtaken by numerical error beyond that point.

In the $L^2(-1,1)$-norm, Hermite elements guarantee $\mathcal{O}(n^{-4})$ convergence 
of the discretization error for smooth solutions \cite[Chapter 2.4]{stoer_bulirsch_book}. 
In the present case, this means that numerical issues overtake discretization as the 
main error source already at $n \approx 300$, which is also depicted in Figure 
\ref{fig:fem-biharmonic-nonpreconditioned} together with the comparison to $n^{-4}$.

\begin{figure}[h!] \centering
\includegraphics[width=0.8\textwidth]{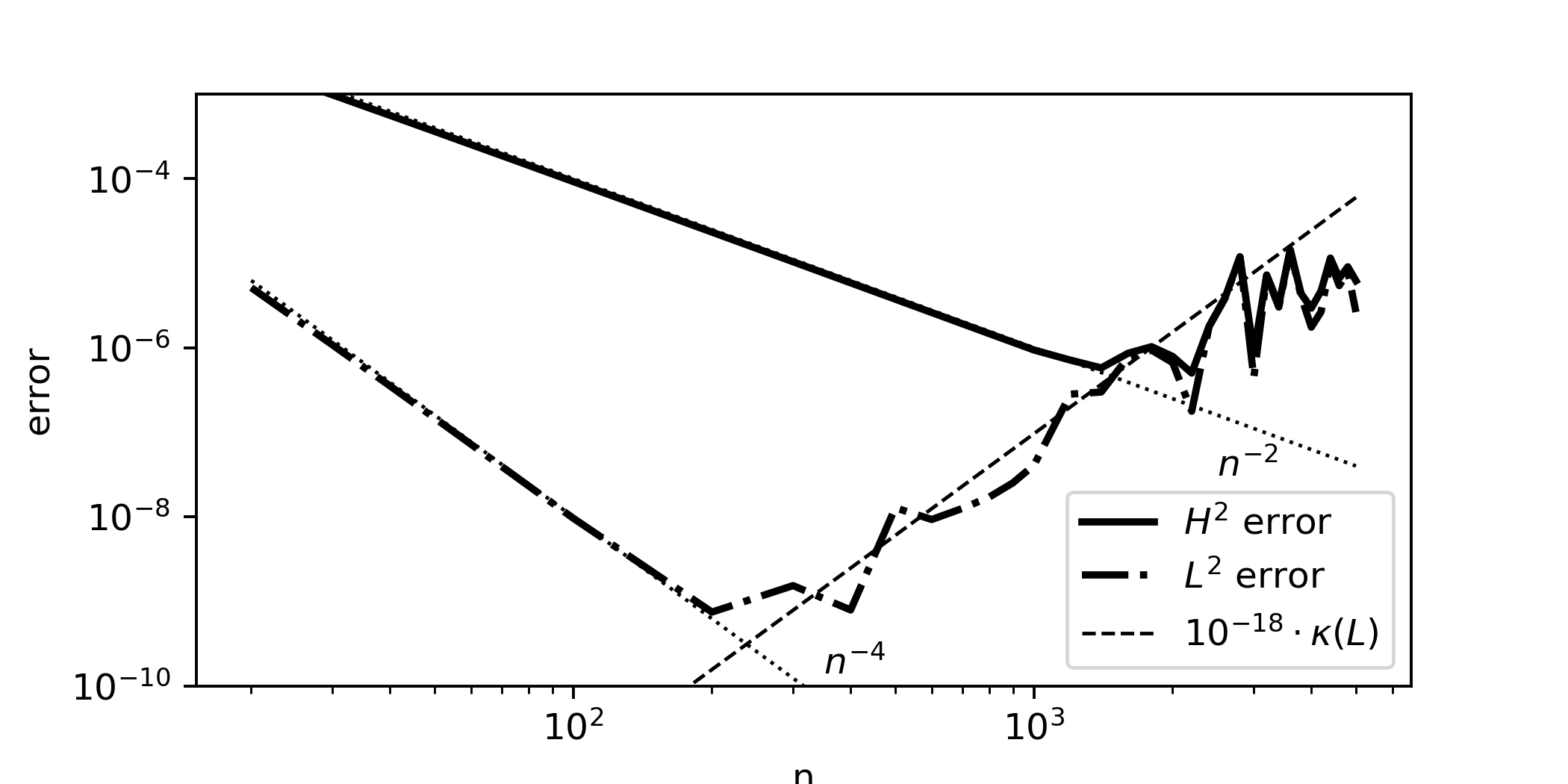}
\caption{Relative error $\Vert u - \hat{u} \Vert / \Vert u \Vert$ as a function 
of the number of cells $n$ of the finite-element approximation $\hat{u}$ to the 
true solution $u = (1-x^2)^2$ of the biharmonic equation with constant right-hand 
side $u^{(4)}(x) = 24$.  The solution is approximated with Hermite elements, and 
error is computed in $H^2$ and $L^2$ norms. Integrals were computed using 
Gaussian quadrature with $11$ quadrature points per cell. Also shown are the 
expected $\mathcal{O}(n^{-2})$-scaling of the error and the multiple $\epsilon 
\kappa(\mathbf{L})$ of the involved matrix, where $\epsilon = 10^{-18}$ is close 
to machine precision.}
\label{fig:fem-biharmonic-nonpreconditioned}
\end{figure}

%%%%%%%%%%%%%%%%%%%%%%%%%%%%%%%%%%%%%%%%%%%%%%%%%%%%%%%%%%%
\subsection{Integration preconditioning for fourth-order differential equations}
\label{sec:fem-integration}

Consider now an operator $\mathcal{R}$ to be used as a right preconditioner for 
the differential equation $\mathcal{L} u = f$. Replacing $u$ by $\mathcal{R}v $, 
where $v$ is any function that is mapped by $\mathcal{R}$ to the same smoothness 
and boundary properties as $u$, 
\begin{align}
    \mathcal{R} v \in H_0^2(-1,1), \label{eq:label-conforming-condition} 
\end{align}
we find for $w \in H^2_0(-1,1)$ that $\mathcal{R}v $ may also replace $u$ in the 
bilinear form $\mathsf{a}(w, \mathcal{R} v)$.

The system matrix for the preconditioned operator equation is again obtained by 
computing the bilinear form on all $2n \times 2n$ pairs of Hermite basis 
functions and is given by its entries
\begin{align}\nonumber
    \tilde{\mathbf{L}}[j,k] = \mathsf{a}(\phi_j, \mathcal{R} \phi_k).
\end{align}
Notably, the knowledge of $\mathsf{a}( \cdot , \mathcal{R} \cdot)$ for arbitrary 
arguments is not required. Instead, for the computation of the matrix evaluation 
of the preconditioner on the elements $\phi_k$ of the Hermite basis and 
computation of $\mathsf{a}$ on pairs of $\phi_j $ and $\mathcal{R} \phi_k$ is 
sufficient.

For a fourth-order differential operator with leading-order term $\frac{d^4}{dx^4}$, 
we use four-fold integration as the preconditioner. Moreover, for $\mathcal{L}: 
H^2_0(-1,1) \to H^{-2}(-1,1) $, the four-fold integration preconditioner is 
chosen such that it takes care of boundary conditions, i.e. $\mathcal{R}: H^2_0
(-1,1) \to H^{-2}(-1,1) $.

Computing $\Phi_k = \mathcal{R} \phi_k$ for $k \in \{1, \dotsc, 2n \}$ is 
straightforward. During the cell-wise integration of $\phi_k$, $k \in \{1, 
\dotsc, 2n \}$, in each of the $n+1$ cells, $4$ degrees of freedom arise in the 
form of integration constants. The first $4n$ of these are chosen such that 
$\Phi_k$ and its first three derivatives are continuous on the boundaries between 
the cells. Because $\phi_k$ is continuous with continuous first derivative, it 
follows that $\Phi_k \in H^2(-1,1)$. The last four integration constants are 
determined by the boundary conditions of $H^2_0(-1,1)$. Hence, $\Phi_k \in H^2_0
(-1,1)$ is guaranteed.

The resulting functions $\Phi_k = \mathcal{R} \phi_k$ are used as the trial-space 
basis. The matrix $\tilde{\mathbf{L}}$ is computed numerically using Gaussian 
quadrature. After solving the system $\tilde{\mathbf{L}} \mathbf{v} = \mathbf{b}$, 
we reconstruct the solution $\hat{u} = \sum_{k=1}^{2n} v_k \Phi_k$.

\begin{figure}
    \centering
    \includegraphics[width=0.8\textwidth]{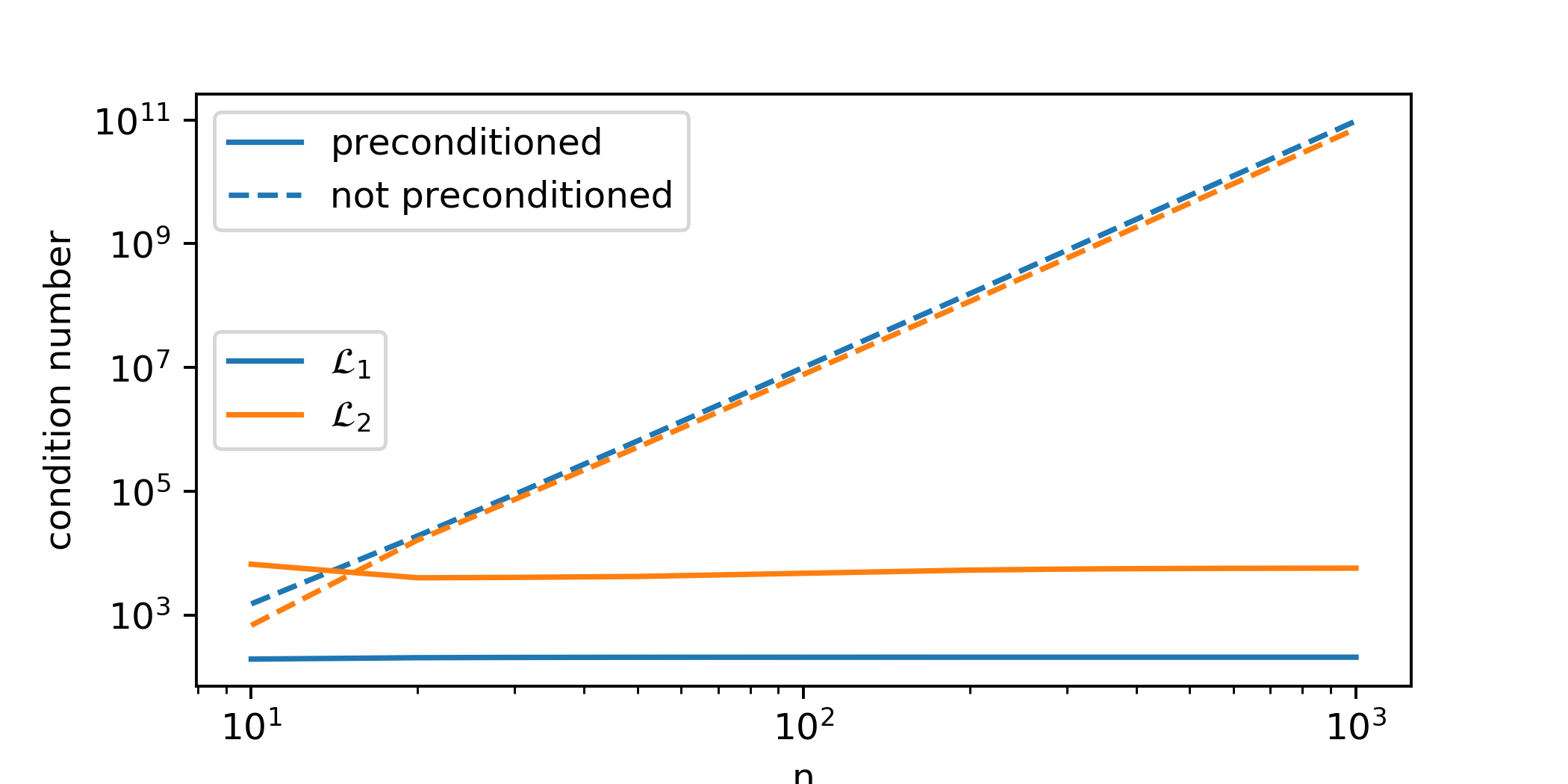}
    \caption{Condition numbers of the non-preconditioned and preconditioned 
    method for the biharmonic equation $\mathcal{L}_1$ and the differential 
    operator $\mathcal{L}_2$ defined in equation (\ref{eq:fem-test-dop-example})}
    \label{fig:fem-comparison-condition-numbers}
\end{figure}

Instead of growing as $\mathcal{O}(n^4)$, the condition number of $\tilde{\mathbf{L}}$ 
approaches a constant as $n \to \infty$. Figure \ref{fig:fem-comparison-condition-numbers} 
shows the condition number of the matrices $\mathbf{L}$ and $\tilde{\mathbf{L}}$ for 
the biharmonic equation
\begin{align}
    \mathcal{L}_1 = \diff{4} u \label{eq:fem-biharmonic-operator}
\end{align}
and for the equation with operator 
\begin{align}
    \mathcal{L}_2 = \diff{4} + \alpha \sin(20 \pi x) \diff{3} + \alpha \cos(20 \pi x^3) 
    \diff{2} + \frac{\alpha x}{1 + x^2}
    \label{eq:fem-test-dop-example}
\end{align}
with $\alpha = 200$. We see that up to $n = 1000$, the condition number of the 
preconditioned matrices does not increase beyond $~100$ and $10000$, respectively, 
whereas condition numbers obtained from the standard algorithm show a clear 
$\mathcal{O}(n^4)$ growth. 

\begin{figure} \centering
\includegraphics[width=\textwidth]{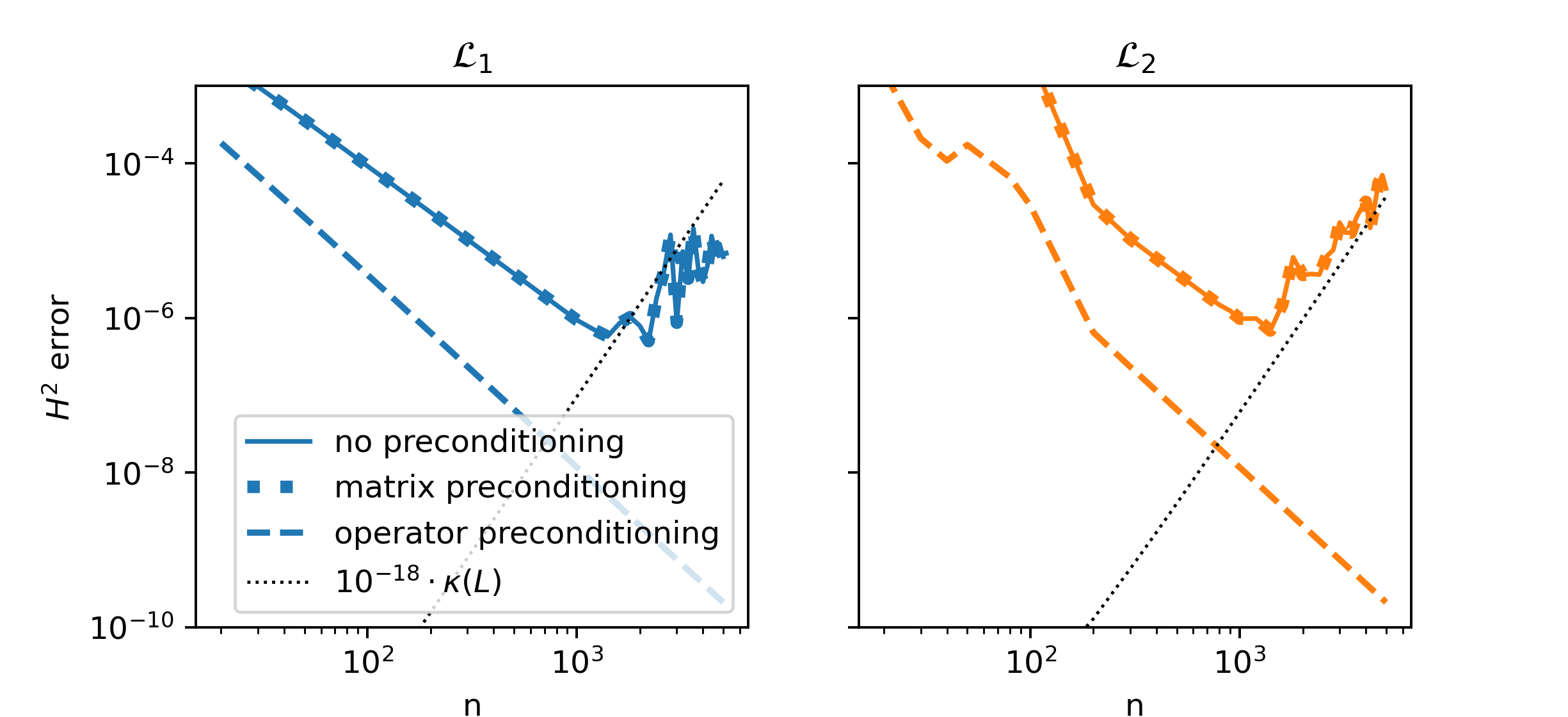}
\caption{Relative error of the approximation to the solution $u = (1-x^2)^2$ 
obtained with non-preconditioned, matrix-preconditioned with the inverse of the 
stiffness matrix of the biharmonic equation, and operator-preconditioned Hermite 
FEM for the two differential equations with differential operators $\mathcal{L}_1$ 
and $\mathcal{L}_2$ defined in equations (\ref{eq:fem-biharmonic-operator}) and 
(\ref{eq:fem-test-dop-example}). Note that matrix preconditioning and the 
unpreconditioned method lead to the same relative error.}
\label{fig:fem-comparison-errors}
\end{figure}

Accordingly, solutions obtained from the preconditioned method do not suffer 
from an erratic increase of error for high $n$ as in Figure 
\ref{fig:fem-biharmonic-nonpreconditioned}. A comparison is shown in Figure 
\ref{fig:fem-comparison-errors}. As before, for the nonpreconditioned method, 
the error starts to increase once $\epsilon \kappa(\mathbf{L})$ grows to the 
range of the discretization error and it remains above $10^{-7}$ for all $n$. 
The same holds for matrix preconditioning: The matrix equivalent of four-fold 
differentiation is the finite-element discretization of the biharmonic operator 
$\frac{d^4}{dx^4}$. Using the inverse of this matrix as a preconditioner leads 
to no visible changes in the relative error. 
On the other hand, FOP is able to find solutions to the problem with 
error as low as $10^{-10}$ at $n=5000$, and with no deviations from the trend 
of decreasing error when $n$ is increased. 

This accuracy is possible because the condition number of the FOP system matrix 
$\tilde{\mathbf{L}}$ remains bounded. 
As we shall see,
this is a consequence of two conditions: 

$i)$ the operator $\tilde{\mathcal{L}} := \mathcal{L} \mathcal{R}$ is an 
\emph{endomorphism} in $H^{-2}(-1,1)$ \cite{CHR02,hiptmair}. 

Indeed, this 
condition allows us to state the following: 
Let $\mathsf{\tilde{b}}(\cdot, \cdot) : H^{-2}(-1,1) \times H^{-2}(-1,1) 
\to \mathbb{R}$ be the bilinear form of $\tilde{\mathcal{L}}$. By the properties 
of $\mathcal{L}$ and $\mathcal{R}$, 
there exist constants $ \tilde{\beta},\tilde{C}_b\in\mathbb{R}>0$ such that
\begin{align}\label{eq:condFop1}
 \tilde{\beta} \, \Vert u \Vert_{H^{-2}(-1,1)}^2 \leq |\mathsf{\tilde{b}}(u, u) 
 |\leq \tilde{C}_b \, \Vert u \Vert_{H^{-2}(-1,1)}^2 \qquad \forall u \in H^{-2}(-1,1).
 \end{align}

It is worth noticing that using $\mathcal{R}$ as a left preconditioner would 
also lead to a suitable FOP. Indeed, in that case we would have the endomorphism 
$\widehat{\mathcal{L}}:=\mathcal{R} \mathcal{L}: H^{2}_0(-1,1) \to H^{2}_0(-1,1)$, 
satisfying for some $\widehat{\beta},\widehat{C}_b>0$
\begin{align}\label{eq:condFop2}
 \widehat{\beta} \, \Vert u \Vert_{H^{2}_0(-1,1)}^2 \leq |\mathsf{\widehat{b}}(u, u) 
| \leq \widehat{C}_b \, \Vert u \Vert_{H^{2}_0(-1,1)}^2 \qquad \forall u \in H^{2}_0(-1,1).
 \end{align}

$ii)$ The Hermite basis functions are \emph{well-conditioned}.

In order to see this, we examine the mass matrix 
\begin{align}
    \label{eq:definition-mass-matrix}
    \mathbf{M}[j,k] = (\phi_j, \phi_k)_{L^2(-1,1)}. 
\end{align} 

Recalling the definition and support of the Hermite functions~\cite[\S~3.2]{brenner2007mathematical}, 
integration yields the structure of the mass matrix
\begin{align}\label{eq:M}
 \mathbf{M} = \Delta x \begin{pmatrix} \mathbf{A} & \mathbf{B} & & \\
                     \mathbf{B}^{\top} & \mathbf{A} & \mathbf{B} & \\
                     & \mathbf{B}^{\top} & \mathbf{A} & \mathbf{B} \\
                     && \ddots & \ddots 
                     \end{pmatrix} \in \reals^{2n \times 2n}
\end{align}
with the $2 \times 2$ blocks
\begin{align}\nonumber
\mathbf{A} &= \begin{pmatrix} 26/35 & 0 \\ 0 & 2/105 \end{pmatrix},
& \qquad \mathbf{B}= \begin{pmatrix} 9/70 & 13/420 \\ -13/420 & -1/140 \end{pmatrix}.
\end{align}

\begin{proposition}\label{prop:condM}
For all $n \in \naturals$, the matrix $\mathbf{M}$ in~\eqref{eq:M} satisfies
\begin{align}\nonumber
\kappa(\mathbf{M})\leq 39\frac{1+\delta}{1-\delta}, \qquad \delta 
= \frac{3+\sqrt{13/3}}{8}\ (<0.64).
\end{align}
\end{proposition}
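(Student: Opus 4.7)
The plan is to view $\mathbf{M}$ as a small perturbation of its block-diagonal part $\mathbf{D}:=\mathbf{I}_n\otimes\mathbf{A}$, whose condition number equals that of $\mathbf{A}$, which is already the target factor $39=(26/35)/(2/105)$. First I would write $\mathbf{M}/\Delta x = \mathbf{D}+\mathbf{O}$, with $\mathbf{O}$ the block-tridiagonal off-diagonal remainder (blocks $\mathbf{B}$ above and $\mathbf{B}^\top$ below), and then factor out $\mathbf{D}^{1/2}$ to produce the congruence
\[
\mathbf{M}/\Delta x \;=\; \mathbf{D}^{1/2}\bigl(\mathbf{I}+\tilde{\mathbf{E}}\bigr)\mathbf{D}^{1/2},\qquad \tilde{\mathbf{E}}\;:=\;\mathbf{D}^{-1/2}\mathbf{O}\,\mathbf{D}^{-1/2}.
\]
Because $\mathbf{A}$ is diagonal, $\tilde{\mathbf{E}}$ remains symmetric block-tridiagonal with zero diagonal blocks and off-diagonal blocks
\[
\mathbf{C}\;:=\;\mathbf{A}^{-1/2}\mathbf{B}\,\mathbf{A}^{-1/2}\;=\;\begin{pmatrix}9/52 & \sqrt{13/3}/8 \\ -\sqrt{13/3}/8 & -3/8\end{pmatrix}
\]
above the diagonal (and $\mathbf{C}^\top$ below). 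A direct computation shows that the two $\ell^1$ row-sums of $\mathbf{C}$ are $9/52+\sqrt{13/3}/8$ and $3/8+\sqrt{13/3}/8$, of which the larger is exactly $\delta$.

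Given the key estimate $\|\tilde{\mathbf{E}}\|_2\le\delta$, the rest is one line. Since $\tilde{\mathbf{E}}$ is symmetric, that estimate is equivalent to the L\"owner sandwich $(1-\delta)\mathbf{I}\preceq \mathbf{I}+\tilde{\mathbf{E}}\preceq (1+\delta)\mathbf{I}$. Conjugating by $\mathbf{D}^{1/2}$ yields $(1-\delta)\mathbf{D}\preceq \mathbf{M}/\Delta x\preceq (1+\delta)\mathbf{D}$, so comparing extremal eigenvalues gives $\kappa(\mathbf{M})\le\kappa(\mathbf{A})\,(1+\delta)/(1-\delta)=39(1+\delta)/(1-\delta)$ as claimed.

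The hard step is the sharp bound $\|\tilde{\mathbf{E}}\|_2 \le \delta$. A naive application of block Gershgorin, or of $\|\tilde{\mathbf{E}}\|_2\le\|\tilde{\mathbf{E}}\|_\infty$, only produces $2\delta$, since an interior block-row of $\tilde{\mathbf{E}}$ receives contributions from two neighboring copies of $\mathbf{C}$; that estimate is useless here because $2\delta>1$. Reaching $\delta$ requires exploiting the sign pattern of $\mathbf{C}$. I would split $\mathbf{C}=\mathbf{C}_s+\mathbf{C}_a$ into its symmetric (diagonal) and skew-symmetric parts and permute the indices ``first components first, then second components'' of each block, so that $\tilde{\mathbf{E}}$ takes the reduced form $\bigl(\begin{smallmatrix}\mathbf{T}_a & \mathbf{F} \\ \mathbf{F}^\top & \mathbf{T}_b\end{smallmatrix}\bigr)$, with $\mathbf{T}_a,\mathbf{T}_b$ scalar tridiagonals whose spectral norms are $2|a|\cos(\pi/(n+1))$ and $2|b|\cos(\pi/(n+1))$ respectively, and $\mathbf{F}$ a bidiagonal whose norm can be computed explicitly and stays strictly below $2|c|$. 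A $2\times 2$ Schur-complement or Rayleigh-quotient argument on this reduced block form, using a weighted Cauchy--Schwarz inequality to control the cross term coming from $\mathbf{F}$, should deliver the desired bound $\delta=|b|+|c|$; the subtle part is that the worst direction of the Rayleigh quotient must turn out to be the one that realizes the larger of the two row-sums of $\mathbf{C}$ and not the triangle-inequality sum of the individual norms of $\mathbf{T}_a$, $\mathbf{T}_b$ and $\mathbf{F}$.
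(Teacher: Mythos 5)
Your reduction is the same as the paper's: your congruence by $\mathbf{D}^{1/2}$ is exactly the paper's diagonal scaling $\tilde{\mathbf{M}}=\mathbf{D}\mathbf{M}\mathbf{D}$, your $\mathbf{C}$ is the paper's $\tilde{\mathbf{B}}$, and both arguments hinge on showing that the spectrum of $\tilde{\mathbf{M}}=\mathbf{I}+\tilde{\mathbf{E}}$ lies in $[1-\delta,1+\delta]$. Your remark that Gershgorin only yields the radius $2\delta>1$ is correct, and it is worth noting that this is precisely the step the paper's own proof elides: the paper takes the Gershgorin radius to be the absolute row sum of a \emph{single} block $\tilde{\mathbf{B}}$, whereas every interior row of $\tilde{\mathbf{M}}$ meets both a $\tilde{\mathbf{B}}$ and a $\tilde{\mathbf{B}}^\top$, doubling the radius.

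The gap in your proposal is that the key estimate you defer, $\Vert\tilde{\mathbf{E}}\Vert_2\leq\delta$, is not merely hard --- it is false for $n\geq 5$, so no Schur-complement or weighted Cauchy--Schwarz refinement can deliver it. Your own reduced form exposes this: $\mathbf{T}_b=\operatorname{tridiag}(-3/8,\,0,\,-3/8)\in\reals^{n\times n}$ is (after your permutation) a principal submatrix of the symmetric matrix $\tilde{\mathbf{E}}$, so by Cauchy interlacing $\Vert\tilde{\mathbf{E}}\Vert_2\geq\Vert\mathbf{T}_b\Vert_2=\tfrac{3}{4}\cos\tfrac{\pi}{n+1}$, which exceeds $\delta=\tfrac{3}{8}+\tfrac{1}{8}\sqrt{13/3}\approx 0.635$ already at $n=5$ and tends to $\tfrac34$. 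Equivalently, the symbol of the block-Toeplitz matrix $\tilde{\mathbf{E}}$ at $\theta=0$ is $\tilde{\mathbf{B}}+\tilde{\mathbf{B}}^\top=\operatorname{diag}(9/26,\,-3/4)$: the worst Rayleigh direction realizes $2\max(|a|,|b|)=3/4$, not the row sum $|b|+|c|=\delta$ that you (and the paper) are aiming for. A sound route is to bound the Hermitian symbol of $\mathbf{M}/\Delta x$ itself, $f(\theta)=\mathbf{A}+e^{\mathrm{i}\theta}\mathbf{B}+e^{-\mathrm{i}\theta}\mathbf{B}^\top$; a short computation shows $\operatorname{spec}(f(\theta))\subset[1/210,\,1]$ for all $\theta$, whence $\kappa(\mathbf{M})\leq 210$ for every $n$, and this bound is asymptotically attained. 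Since $210>39\,\frac{1+\delta}{1-\delta}\approx 175$, the constant in the proposition as stated also appears to require correction, not just the proof.
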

\begin{proof}
The idea is to use Gerschgorin's theorem to a diagonally scaled matrix 
$\tilde{\mathbf{M}}:=\mathbf{D}\mathbf{M}\mathbf{D}$, where 
$\mathbf{D}=\mbox{diag}(\mathbf{D}_1,\mathbf{D}_1,\ldots)\in\reals^{2n \times 
2n}$, with $\mathbf{D}_1 = \big(
\begin{smallmatrix}
 \sqrt{35/26} & 0 \\ 0 & \sqrt{105/2}  
\end{smallmatrix}
\big)$; that is, we apply a diagonal scaling such that 
$\mathbf{D}\mathbf{M}\mathbf{D}$ has 1's on the diagonal\footnote{This is 
equivalent to normalizing the basis functions to have unit norms; which is 
known to minimize the condition number up to a factor $\sqrt{n}$ with a 
diagonal scaling~\cite[Thm.~7.5]{higham}.}. 

Then $\tilde{\mathbf{M}}$ is in the same form as $ \mathbf{M}$~\eqref{eq:M}, 
with $\mathbf{A},\mathbf{B}$ replaced with $\tilde{\mathbf{A}},\tilde{\mathbf{B}}$ 
respectively, where 
\begin{align}\nonumber
\tilde{\mathbf{A}} &= \begin{pmatrix} 1 & 0 \\ 0 & 1 \end{pmatrix},
& \qquad 
\tilde{\mathbf{B}}= \begin{pmatrix} 9/52 & \sqrt{13}/(8\sqrt{3}) \\ -\sqrt{13}/
(8\sqrt{3}) & -3/8 \end{pmatrix}.  
\end{align}
Now by Gerschgorin's theorem, (and since $\tilde{\mathbf{M}}$ is symmetric) 
the eigenvalues of $\tilde{\mathbf{M}}$ must lie in $[1-\frac{1}{8}(3+\sqrt{13/3}),
1+\frac{1}{8}(3+\sqrt{13/3})]=[1-\delta,1+\delta]$. Since this is a positive 
interval, it follows that $\tilde{\mathbf{M}}$ is positive definite and 
the eigenvalues are equal to the singular values. Therefore 
$\kappa(\tilde{\mathbf{M}})\leq \frac{1+\delta}{1-\delta}$. 
Finally, $\kappa(\mathbf{M})\leq \kappa(\tilde{\mathbf{M}})\kappa(\mathbf{D})^2$
and $\kappa(\mathbf{D})^2=\kappa(\mathbf{D}_1)^2=39$, completing the proof. 
\end{proof}

Next, we proceed to prove in a more general way why these two conditions 
guarantee that we arrive at a well-conditioned system.
\begin{theorem}\label{thm:Cond}
Let $X$ be a Hilbert space, and $\mathsf{b}$ a bounded bilinear form $\mathsf{b}(\cdot, 
\cdot) : X \times X \to \mathbb{R}$ 
so that there exists $M>0$ such that $|\mathsf{{b}}(v, u)| \leq M \Vert u \Vert_{X}
\Vert v \Vert_{X}$ for all $v \in X$. 
Suppose that $\mathsf{b}$ further satisfies
\begin{align}\label{eq:bound}
 {\beta} \, \Vert u \Vert_{X}^2 \leq |\mathsf{{b}}(u, u) |
 \qquad \forall v \in X,
 \end{align}
for some $\beta>0$. 

Let $X_h \subset X$ be a finite dimensional space such that $\mathsf{dim}(X_h) = N$ and 
$X_h = \mathsf{span} \lbrace q_i \rbrace_{i=1}^N$. 
Then the matrix $\mathbf{B}_h$ defined by 
$(\mathbf{B}_h)_{ij} = \mathsf{b}(q_j, q_i)$ satisfies
\begin{align}\label{eq:Bhprops}
\kappa(\mathbf{B}_h) \leq \frac{M}{\beta}(\kappa(\mathcal{Q}))^2,
\end{align}
where $\mathcal{Q}=[q_1,\ldots,q_N]$ is a quasimatrix\footnote{A quasimatrix has 
(among other decompositions inheriting matrix decompositions) the QR 
factorization~\cite{trefethen2010householder} $\mathcal{Q}=\mathcal{\tilde Q}R$, 
where $\mathcal{\tilde Q}$ has orthonormal columns (with respect to $(\cdot,\cdot)_X$), 
and $R\in\mathbb{R}^{N\times N}$ is upper triangular. The condition number is 
defined by the matrix condition number $\kappa(\mathcal{Q})=\kappa(R)$.}, 
i.e., a matrix whose columns are functions (e.g.~\cite{chebfunofficial}). 
\end{theorem}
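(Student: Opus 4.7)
The plan is to translate everything from the matrix side to the bilinear-form side using the quasimatrix $\mathcal{Q}$, then apply the bounds $M$ and $\beta$ on $\mathsf{b}$. The key algebraic identity is that if $\mathbf{v}\in\mathbb{R}^N$ corresponds to $u=\sum_{i=1}^N v_i q_i = \mathcal{Q}\mathbf{v} \in X_h$, then
\begin{equation*}
\mathbf{v}^\top \mathbf{B}_h \mathbf{w} = \mathsf{b}\!\left(\mathcal{Q}\mathbf{w},\,\mathcal{Q}\mathbf{v}\right),
\end{equation*}
so $\mathbf{B}_h$ acts as $\mathsf{b}$ restricted to $X_h$ read through the coordinate map $\mathbf{v}\mapsto\mathcal{Q}\mathbf{v}$. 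The distortion of this coordinate map is exactly controlled by the QR factorization $\mathcal{Q}=\tilde{\mathcal{Q}}R$ from the footnote: since $\tilde{\mathcal{Q}}$ has $X$-orthonormal columns, $\|\mathcal{Q}\mathbf{v}\|_X = \|R\mathbf{v}\|_2$, giving
\begin{equation*}
\sigma_{\min}(R)\,\|\mathbf{v}\|_2 \;\leq\; \|\mathcal{Q}\mathbf{v}\|_X \;\leq\; \sigma_{\max}(R)\,\|\mathbf{v}\|_2,
\end{equation*}
with $\kappa(\mathcal{Q})=\kappa(R)=\sigma_{\max}(R)/\sigma_{\min}(R)$.

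For the upper bound on $\|\mathbf{B}_h\|_2$, I would write $\|\mathbf{B}_h\|_2 = \sup_{\mathbf{v},\mathbf{w}\neq 0}\frac{\mathbf{w}^\top \mathbf{B}_h \mathbf{v}}{\|\mathbf{w}\|_2\|\mathbf{v}\|_2}$, substitute the identity above, and apply boundedness of $\mathsf{b}$ followed by the QR norm equivalence twice. This yields $\|\mathbf{B}_h\|_2 \leq M\,\sigma_{\max}(R)^2$.

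For the lower bound, i.e.\ the upper bound on $\|\mathbf{B}_h^{-1}\|_2$, the plan is to use the diagonal form of the coercivity-type hypothesis \eqref{eq:bound}. The same identity gives $\mathbf{v}^\top\mathbf{B}_h\mathbf{v} = \mathsf{b}(\mathcal{Q}\mathbf{v},\mathcal{Q}\mathbf{v})$, and therefore
\begin{equation*}
|\mathbf{v}^\top \mathbf{B}_h \mathbf{v}| \;\geq\; \beta\,\|\mathcal{Q}\mathbf{v}\|_X^2 \;\geq\; \beta\,\sigma_{\min}(R)^2\,\|\mathbf{v}\|_2^2.
\end{equation*}
Combining this with the Cauchy--Schwarz inequality $|\mathbf{v}^\top\mathbf{B}_h\mathbf{v}|\leq \|\mathbf{v}\|_2\|\mathbf{B}_h\mathbf{v}\|_2$ produces $\|\mathbf{B}_h\mathbf{v}\|_2 \geq \beta\,\sigma_{\min}(R)^2\,\|\mathbf{v}\|_2$, so $\|\mathbf{B}_h^{-1}\|_2 \leq 1/(\beta\,\sigma_{\min}(R)^2)$. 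Multiplying the two bounds gives exactly $\kappa(\mathbf{B}_h)\leq (M/\beta)\,\kappa(R)^2 = (M/\beta)\,\kappa(\mathcal{Q})^2$.

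The only nontrivial ingredient is the use of the quasimatrix QR factorization to move between the $\ell^2$ norm on coordinates and the $X$-norm on functions; once one accepts the footnote's convention $\kappa(\mathcal{Q}):=\kappa(R)$ and the fact that $X$-orthonormal columns preserve norms, everything else is a direct application of the hypotheses $M$ and $\beta$. I do not expect any genuine obstacle; the main thing to be careful about is that the coercivity hypothesis \eqref{eq:bound} is phrased in terms of $|\mathsf{b}(u,u)|$ (absolute value), so a Cauchy--Schwarz passage rather than a direct Rayleigh-quotient argument is needed to extract a lower bound on $\|\mathbf{B}_h\mathbf{v}\|_2$ without assuming symmetry or definiteness of $\mathbf{B}_h$.
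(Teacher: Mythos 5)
Your proposal is correct and follows essentially the same route as the paper's proof: bound $\sigma_{\max}(\mathbf{B}_h)$ above by $M\sigma_{\max}(\mathcal{Q})^2$ via boundedness of $\mathsf{b}$, and bound $\sigma_{\min}(\mathbf{B}_h)$ below by $\beta\sigma_{\min}(\mathcal{Q})^2$ via the quadratic form $|\mathbf{v}^\top\mathbf{B}_h\mathbf{v}|$ and Cauchy--Schwarz. Your write-up is in fact slightly more careful than the paper's, since you make explicit both the Cauchy--Schwarz passage from $|\mathbf{v}^\top\mathbf{B}_h\mathbf{v}|$ to $\|\mathbf{B}_h\mathbf{v}\|_2$ (which the paper compresses into ``this implies'') and the QR-based norm equivalence $\|\mathcal{Q}\mathbf{v}\|_X=\|R\mathbf{v}\|_2$ underlying the singular-value bounds.
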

\begin{proof}
 First, let $B \in L(X,X)$ be the bounded linear mapping corresponding to the bilinear 
 form $\mathsf{b}$. 

For any $\mathbf{u}=(u_1,\ldots,u_N)^\top,\mathbf{v}=(v_1,\ldots,v_N)^\top \in
\mathbb{C}^N$ of unit norm $\|\mathbf{u}\|_2=\|\\mathbf{v}\|_2=1$, we have 
\[(\mathbf{B}_h\mathbf{v})[j] = \mathsf{b}(q_j, \sum_{\ell=1}^{N} v_\ell q_\ell) , 
\qquad 
\mathbf{u}^\top\mathbf{B}_h\mathbf{v} = \mathsf{b}(\sum_{\ell=1}^{N} u_\ell q_j, 
\sum_{\ell=1}^{N} v_\ell q_\ell) . 
\]
Hence by the assumption~\eqref{eq:bound} we have 
\begin{equation}  \label{eq:fov}
\beta\|\sum_{\ell=1}^{N} v_\ell q_\ell\|_X^2
 \leq |\mathbf{v}^\top\mathbf{B}_h\mathbf{v}| . 
\end{equation}
We can bound $\|\sum_{\ell=1}^{N} v_\ell q_\ell\|_X$ as
\begin{equation}  \label{eq:boundsqj}
\sigma_{\min} (\mathcal{Q})\leq 
\|\sum_{\ell=1}^{N} v_\ell q_\ell\|_X\leq \sigma_{\max}(\mathcal{Q})  .
\end{equation}
It follows that
$|\mathbf{v}^\top\mathbf{B}_h\mathbf{v}| \geq \sigma_{\min}^2 (\mathcal{Q})
\beta$ for any unit vector $\mathbf{v}$; this implies 
$\|\mathbf{B}_h\mathbf{v}\|_2 \geq \sigma_{\min}^2 (\mathcal{Q})\beta$ for 
any unit norm vector $\mathbf{v}$, and therefore $\sigma_{\min}(\mathbf{B}_h)
\geq \sigma_{\min}^2 (\mathcal{Q})\beta$. 

We next bound $\sigma_{\max}(\mathbf{B}_h)=\|\mathbf{B}_h\|_2$ from above. 
The first bound in~\eqref{eq:bound} and~\eqref{eq:Bhprops} yield 
$|\mathbf{u}^\top\mathbf{B}_h\mathbf{v}| \leq 
M\sigma_{\max}^2(\mathcal{Q})$, for any $\mathbf{u},\mathbf{v}$ of unit norm. 
This means $\mathbf{B}_h\leq M\sigma_{\max}^2(\mathcal{Q})$. 

Putting these together, 
we conclude that 
\[
\kappa(\mathbf{B}_h)\leq \frac{M\sigma_{\max}^2(\mathcal{Q})}{\sigma_{\min}^2 
(\mathcal{Q})\beta}=\frac{M}{ \beta}(\kappa(\mathcal{Q}))^2.
\]
\end{proof}

This result shows that the linear system is well-conditioned if the operator after 
FOP has a tightly bounded bilinear form, and a well-conditioned basis $\mathcal{Q}$ 
is used for the discretization, ideally $\kappa(\mathcal{Q})$ not growing with the 
discretization $N$. 

It is worth noting that the presence of $(\kappa(\mathcal{Q}))^2$ in~\eqref{eq:Lhkappa} 
appears to be necessary, and is not an artifact of the analysis. To see this, consider 
the case $\mathcal{L} \mathcal{R}=\mathcal{I}$ (the 'ideal FOP'); then $\tilde{\mathbf{L}}_h$ 
is the Gram matrix of $\mathcal{Q}$, so $\kappa(\tilde{\mathbf{L}}_h)=(\kappa(\mathcal{Q}))^2$. 

Let us now return to the specific examples in Figure~\ref{fig:fem-comparison-errors}.
Since the mass matrix \eqref{eq:definition-mass-matrix} has the property 
$\kappa(\mathbf{M})=(\kappa(\mathcal{Q}))^2$. Theorem~\ref{thm:Cond} and 
Proposition~\ref{prop:condM} imply that 
$\kappa(\tilde{\mathbf{L}}_{1h})$ is bounded independently of $n$ 
for~\eqref{eq:fem-biharmonic-operator}, for which $\beta=M=1$.

%%%%%%%%%%%%%%%%%%%%%%%%%%%%%%%%%%%%%%%%%%%%%%%%%%%%%%%%%%%
\subsection{Perturbed identity after FOP}
In many cases, such as \eqref{eq:fem-test-dop-example}, the operator after FOP is a 
perturbed identity $\mathcal{I} + \mathcal{K}$. In such cases we have the following.

\begin{corollary}\label{thm:IKrev}
Let $\mathcal{R}$ be such that 
\begin{equation}  \label{eq:LRpertI}
\mathcal{L} \mathcal{R} = \mathcal{I} + \mathcal{K} : X \to X,  
\end{equation}
where $\mathcal{I}$ is the identity operator, and 
$\mathcal{K}$ is bounded, i.e., 
 there exists $M_{\mathcal{K}}>0$ such that 
$(u,\mathcal{K}u)_X \leq M_{\mathcal{K}} \Vert u \Vert_{X}\Vert v \Vert_{X}$ for all $u,v\in X$. 
Suppose that
\begin{equation}
  \label{eq:alpbet}
\beta_{\mathcal{K}}\|u\|_X^2   \leq (u,\mathcal{K}u)_X \qquad \forall u\in X,
\end{equation}
 for some constant $\beta_{\mathcal{K}}>-1$.
Then we have 
\begin{equation}
\label{eq:Lhkappa}  
 \kappa(\tilde{\mathbf{L}}_h) \leq \frac{1+M_{\mathcal{K}}}{1+\beta_{\mathcal{K}}}(\kappa(\mathcal{Q}))^2
\end{equation}
where $\tilde{\mathbf{L}}_h$ is the Galerkin matrix of $\tilde{\mathcal{L}}
:=\mathcal{L} \mathcal{R}$ using the basis functions $\lbrace q_i \rbrace_{i=1}^N$,
and $\mathcal{Q}=[q_1,\ldots,q_N]$.
\end{corollary}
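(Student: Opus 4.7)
The plan is to read off this corollary as a direct specialization of Theorem~\ref{thm:Cond} applied to the bilinear form naturally associated with $\mathcal{I}+\mathcal{K}$, so the whole argument reduces to identifying the continuity constant $M$ and the coercivity constant $\beta$ in terms of $M_{\mathcal{K}}$ and $\beta_{\mathcal{K}}$, and then substituting.

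Concretely, first I would define $\mathsf{b}(v,u):=(v,(\mathcal{I}+\mathcal{K})u)_X=(v,u)_X+(v,\mathcal{K}u)_X$, which is the bilinear form whose Galerkin matrix in the basis $\{q_i\}_{i=1}^N$ is exactly $\tilde{\mathbf{L}}_h$ (this follows from~\eqref{eq:LRpertI} and the definition of the Galerkin matrix used in Theorem~\ref{thm:Cond}). Then I would verify continuity of $\mathsf{b}$: by Cauchy--Schwarz for the inner product part and by the boundedness hypothesis on $\mathcal{K}$ for the second part,
\begin{equation*}
|\mathsf{b}(v,u)|\le \|v\|_X\|u\|_X+M_{\mathcal{K}}\|v\|_X\|u\|_X=(1+M_{\mathcal{K}})\|v\|_X\|u\|_X,
\end{equation*}
so one may take $M=1+M_{\mathcal{K}}$ in Theorem~\ref{thm:Cond}.

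Next I would check the coercivity hypothesis~\eqref{eq:bound}. Using~\eqref{eq:alpbet},
\begin{equation*}
\mathsf{b}(u,u)=\|u\|_X^2+(u,\mathcal{K}u)_X\ge (1+\beta_{\mathcal{K}})\|u\|_X^2,
\end{equation*}
and since $\beta_{\mathcal{K}}>-1$ the right-hand side is strictly positive, so $|\mathsf{b}(u,u)|\ge (1+\beta_{\mathcal{K}})\|u\|_X^2$ and we may take $\beta=1+\beta_{\mathcal{K}}>0$. Plugging these constants into the bound~\eqref{eq:Bhprops} of Theorem~\ref{thm:Cond} gives exactly~\eqref{eq:Lhkappa}.

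There is no real obstacle in this argument, the bilinear form is manifestly bounded and coercive once $\beta_{\mathcal{K}}>-1$. The one small point worth being careful about is making sure the absolute value in~\eqref{eq:bound} is handled correctly: because $1+\beta_{\mathcal{K}}>0$, the quantity $\mathsf{b}(u,u)$ is already nonnegative, so the inequality $|\mathsf{b}(u,u)|\ge(1+\beta_{\mathcal{K}})\|u\|_X^2$ follows without needing to consider signs. Everything else is essentially bookkeeping to match notation with Theorem~\ref{thm:Cond}.
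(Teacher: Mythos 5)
Your proposal is correct and follows essentially the same route as the paper: identify the bilinear form of $\mathcal{I}+\mathcal{K}$, read off the continuity constant $1+M_{\mathcal{K}}$ and the coercivity constant $1+\beta_{\mathcal{K}}$, and invoke Theorem~\ref{thm:Cond}. Your version is slightly more explicit about why $\beta_{\mathcal{K}}>-1$ makes the absolute value in the coercivity hypothesis harmless, which is a fair point the paper leaves implicit.
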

\begin{proof}
By the assumptions, we have 
\begin{align}
(1+\beta_{\mathcal{K}})\|u\|_X^2 &\leq (u, (\mathcal{I} + \mathcal{K}) u)_{X}, 
& (v, (\mathcal{I} + \mathcal{K}) u)_{X} \leq (1+M_{\mathcal{K}})\|u\|_X\|v\|_X
\end{align}
for all $u,v \in X$.
Therefore, the result follows from Theorem~\ref{thm:Cond}.
\end{proof}

\begin{remark}
Theorem~\ref{thm:IKrev} 
indicates that two conditions ensure $\tilde{\mathbf{L}}_h$ is well-conditioned: 
(i) that the FOP is effective so that the FOP'd operator is a ``small'' 
perturbation of identity, and (ii) a well-conditioned basis $\mathcal{Q}$ is 
chosen. We suspect that the assumptions in Corollary~\ref{thm:IKrev} are stronger 
than necessary, and that $\kappa(\tilde{\mathbf{L}}_h)$ could be bounded by a 
constant under a looser condition than~\eqref{eq:alpbet}.  
\end{remark}

\begin{remark}
A good FOP often results in the form~\eqref{eq:LRpertI} with a compact $\mathcal{
K}$. For instance, in the example \eqref{eq:fem-test-dop-example}, under the 
assumption of continuously differentiable coefficient functions and by compactness 
of the embeddings $H^{\lambda+1}(-1,1) \subset H^\lambda(-1,1)$ for $\lambda 
\geq 0$ \cite[Chapter 1]{grisvard}, we have $  \mathcal{L} \mathcal{R} = 
\mathcal{I} + \mathcal{K},$ where
\begin{align} \nonumber
    \mathcal{K} = \left( a_3 \diff{3} + a_2 \diff{2} + a_1 \diff{1} + a_0 \right) 
    \mathcal{R}
\end{align}
is a compact operator. 

The ``identity plus compact operator'' resembles the situation in Section 
\ref{sec:olver_townsend}. Compactness of $\mathcal{K}$ can be useful for 
verifying its properties, such as \eqref{eq:alpbet}. 
\end{remark}

Furthermore, the extension to $\mathcal{L} \mathcal{R} = \mathcal{I} + \mathcal{K}$ 
highlights another specialty of FOP: When investigating the properties of any 
form of preconditioning, we aim for statements about the resulting matrices. 
Analysis of matrix-preconditioning schemes may take place on the matrix level, 
for example by bounding the generalized Rayleigh coefficient
$(\mathbf{u}^{\top} \mathbf{L} \mathbf{u})/ (\mathbf{u} \mathbf{R}^{-1} 
\mathbf{u})$ \cite{wathen}. This is also possible when all elements and operators 
on the continuous space have representations in terms of infinite-dimensional 
matrices, such as in the case of spectral methods. For other cases of FOP, 
however, another option is to perform analysis on the levels of the operators 
themselves. This diffuses the classical distinction between numerical linear 
algebra and analysis of differential equations and calls for joint treatment 
of the whole solution process, a claim that is already being pushed for by other 
authors such as \cite{malek_strakos} and \cite{saad_book}.

%%%%%%%%%%%%%%%%%%%%%%%%%%%%%%%%%%%%%%%%%%%%%%%%%%%%%%%%%%%%%%%%%%%%%%%%%%%%%%%
% \input{text/text-6-conclusions}
\section{Discussion}

FOP can dramatically improve the accuracy of computed solutions in a variety of 
contexts. Our analysis identifies three properties required for a successful FOP. 
First, one needs to identify an operator such that its composition with $\mathcal{
L}$ is an endomorphism. Second, the test and trial basis must be chosen to be 
conforming and well-conditioned. With these two properties, one can guarantee 
that the condition number remains bounded. Third, after FOP is applied, the 
matrix and right-hand side in the linear system need to be computed with high 
accuracy. As discussed in Section~\ref{chap:fem}, this can be guaranteed by 
requiring that the preconditioned operator $\tilde{\mathcal{L}}$ is continuous. 

We have highlighted two classical applications (polynomial interpolation and 
spectral methods) that can be regarded as an instance of FOP. We believe that 
many other high-accuracy algorithms (existing and forthcoming)  could also be 
understood as a form of FOP, and that much can be learned by revisiting existing 
methods and establishing connections from this perspective. 

It is important to point out the potential drawbacks of FOP. First, some 
desirable structures in the unpreconditioned system may be lost. For example, 
the FOP in Section \ref{chap:fem} results in dense matrices. This negates the 
sparsity of the unpreconditioned system, one of the typical benefits of FEM. 
Another drawback is the difficulty of finding a good FOP, that is, identifying 
an operator verifying the first property listed above. Fortunately, this is 
also needed in operator preconditioning \cite{hiptmair}. Therefore, investigations 
of such operators have already taken place in the literature, see for example 
\cite{wathen, GrS06, GrS07, GSU21} and the references therein. Building on this 
well-established knowledge about operator preconditioning, it remains to come 
up with a suitable discretization.

For many applications of scientific computing, this is an open challenge. 
By overcoming it, one would obtain solutions with unprecedented accuracy.

%%%%%%%%%%%%%%%%%%%%%%%%%%%%%%%%%%%%%%%%%%%%%%%%%%%%%%%%%%%%%%%%%%%%%%%%%%%%%%%
\bibliographystyle{plain}
\bibliography{literature_plain}

\end{document}